\newcounter{lemma}[section]
\newcounter{corollary}[section]
\newcounter{remark}[section]
\newcounter{theorem}[section]
\newcounter{proposition}[section]
\newcounter{example}
\numberwithin{equation}{section}
\begin{document}

\markboth{E.\ A.\ Sevost'yanov}{\centerline{ON MAPPINGS WITH THE
INVERSE POLETSKY INEQUALITY ...}}

\def\cc{\setcounter{equation}{0}
\setcounter{figure}{0}\setcounter{table}{0}}

\overfullrule=0pt


\author{E.\ A.\ Sevost'yanov}

\title{
{\bf ON MAPPINGS WITH THE INVERSE POLETSKY INEQUALITY ON RIEMANNIAN
SURFACES}}

\date{\today}
\maketitle

\begin{abstract}
We study some problems related to the boundary behavior of maps of
domains of Riemannian surfaces. In particular, for mappings
satisfying the inverse Poletsky type modulus inequality, we
establish the possibility of their continuous extension to the
boundary in terms of prime ends. We also study the local behavior of
such mappings at boundary points.
\end{abstract}

\bigskip
{\bf 2010 Mathematics Subject Classification: Primary 30С65, 31A15,
30C62}

\section{Introduction}

This article is devoted to the study of mappings with bounded and
finite distortion, which have been actively studied recently, see,
for example, \cite{IM} and \cite{MRSY}. In a relatively recent
series of our papers, we carried out a detailed study of mappings
satisfying the so-called inverse Poletsky inequality. Note that we
have given a more or less complete description of this class both in
Euclidean and in metric spaces, see, for example, \cite{SevSkv$_2$},
\cite{SF} and \cite{Sev$_3$}. This article is devoted to another
important aspect of the study of these mappings, namely, we study
mappings which are defined in domains on Riemannian surfaces with
bad boundaries. We are interested here in mappings with branch
points, since the corresponding case of homeomorphisms was studied
earlier (see~\cite{RV$_2$}, \cite{SDIK}). It should be noted that
the approach to prime ends on Riemannian surfaces is somewhat
different from the more conventional approach on Riemannian
manifolds and the Euclidean case, see e.g.~\cite{KR$_2$},
\cite{IS$_2$}. We also study the issues of the local behavior of
these mappings at points on the boundary of the domain.

\medskip
Most of the definitions found in the text can be found in~\cite{RV}.
Everywhere below, unless otherwise stated, the Riemannian surfaces
${\Bbb S}$ and ${\Bbb S}_*$ are of the hyperbolic type. Further
$ds_{\widetilde{h}}$ and $d\widetilde{v},$ $ds_{\widetilde{h_*}}$
and $d\widetilde{v_*}$ denote the length and area elements on
Riemannian surfaces $ {\Bbb S}$ and ${\Bbb S}_*,$ respectively. We
also use the notation $\widetilde{h}$ for the metric on the surface
${\Bbb S},$ in particular,
\begin{equation}\label{eq4}
\widetilde{B}(p_0, r):=\{p\in {\Bbb S}: \widetilde{h}(p, p_0)<r\},
\quad\widetilde{S}(p_0, r):=\{p\in {\Bbb S}: \widetilde{h}(p,
p_0)=r\}
\end{equation}
are the disk and the circle on ${\Bbb S}$ centered at $p_0$ and of
the radius $r>0,$ respectively. Set
\begin{equation}\label{eq1**}
\widetilde{A}=\widetilde{A}(p_0, r_1, r_2)=\{p\in {\Bbb S}:
r_1<\widetilde{h}(p, p_0)<r_2\}\,.
\end{equation}
The following definitions refer to Carath\'{e}odory~\cite{Car},
cf.~\cite{KR$_2$}. Recall that a continuous mapping $\sigma:{\Bbb
I}\rightarrow {\Bbb S},$ ${\Bbb I}=(0, 1),$ is called a {\it Jordan
arc} in ${\Bbb S},$ if $\sigma(t_1)\ne\sigma(t_2)$ for $t_1\ne t_2$.
Further we will sometimes use $\sigma$ for $\sigma({\Bbb I}),$
$\overline{\sigma}$ for $\overline{\sigma({\Bbb I})}$ and
$\partial\sigma$ for $\overline{\sigma({\Bbb
I})}\setminus\sigma({\Bbb I})$. A {\it cut} in a domain $D$ is
either a Jordan arc $\sigma:{\Bbb I}\rightarrow D,$ ends which lie
on $\partial D,$ or a closed Jordan curve in $D.$ The sequence
$\sigma_1,\sigma_2,\ldots,\sigma_m,\ldots$ of cuts in $D$ is called
a {\it chain} if:

\medskip
(i) $\overline{\sigma_i}\cap\overline{\sigma_j}=\varnothing$ for any
$i\ne j$, $i,j= 1,2,\ldots$;

\medskip
(ii)  $\sigma_m$ separates $D$, i.e., $D\setminus \sigma_m$ consists
precisely from two components one of which contains $\sigma_{m-1},$
and another contains $\sigma_{m+1},$

\medskip
(iii) $\widetilde{h}(\sigma_m)\rightarrow\infty$ as
$m\rightarrow\infty,$
$\widetilde{h}(\sigma_m)=\sup\limits_{p_1,p_2\in\sigma_m
}\widetilde{h}(p_1, p_2).$

\medskip
By the definition, a chain of cuts $\{\sigma_m\}$ defines a chain of
domains $d_m\subset D$ such that $\partial\,d_m\cap
D\subset\sigma_m$ and $d_1\supset d_2\supset\ldots\supset
d_m\supset\ldots$. Two chains of cuts $\{\sigma_m\}$ and
$\{\sigma_k^{\,\prime}\}$ are called {\it equivalent,} if for each
$m=1,2,\ldots$ the domain $d_m$ contains all the domains
$d_k^{\,\prime }$ except for a finite number, and for each
$k=1,2,\ldots$ the domain $d_k^{\,\prime}$ also contains all the
domains $d_m$ except for a finite number. {\it End} of $D$ is the
class of equivalent chains of cuts in $D$.

Let $K$ be a prime end in $D\subset {\Bbb S},$ and $\{\sigma_m\}$
and $\{\sigma_m^{\,\prime}\}$ are two chains in $K$, $d_m$ and
$d_m^{\,\prime}$ are domains corresponding to $\sigma_m$ and
$\sigma_m^{\,\prime}$. Then
$$\bigcap\limits_{m=1}\limits^{\infty}\overline{d_m}\subset
\bigcap\limits_{m=1}\limits^{\infty}\overline{d_m^{\,\prime}}\subset
\bigcap\limits_{m=1}\limits^{\infty}\overline{d_m}\ ,$$ and thus
$$\bigcap\limits_{m=1}\limits^{\infty}\overline{d_m}=
\bigcap\limits_{m=1}\limits^{\infty}\overline{d_m^{\,\prime}}\ ,$$
in other words, the set
$$I(K)=\bigcap\limits_{m=1}\limits^{\infty}\overline{d_m}$$
depends only on $K$ and does not depend on the choice of the chain
of cuts $\{\sigma_m\}$. The set $I(K)$ is called the {\it impression
of a prime end} $K$. Further $E_D$ denotes the set of all prime ends
in $D,$ and $\overline{D}_P:=D\cup E_D$ denotes the completion of
$D$ by its prime ends. Let us turn $\overline{D}_P$ into the
topological space as follows. Firstly, open sets from $D$ are
considered open in $\overline{D}_P,$ as well. Secondly, the base
neighborhood of a prime end $P\subset E_D$ is defined as the union
of an arbitrary domain $d,$ included in some chain of cuts of $P,$
with all other prime ends in $d.$ In particular, in the topology
mentioned above, a sequence of points $x_n\in D$ converges to an
element $P\in E_D$ if and only if, for any domain $d_m,$ belonging
to the chain of domains $d_1, d_2, d_3, \ldots,$ in $P$ there exists
$n_0=n_0(m) $ such that $x_n\in d_m$ for $n\geqslant n_0.$

\medskip
As usual, a path $\gamma$ on the Riemannian surface ${\Bbb S}$ is a
continuous mapping $\gamma:I\rightarrow {\Bbb S},$ where $I$ is a
finite segment, an interval or a half-interval of a real axis. Let
$\Gamma$ be a family of paths in ${\Bbb S}.$ A Borel function
$\rho:{\Bbb S}\rightarrow [0, \infty]$ is called {\it admissible}
for the family $\Gamma$ of paths $\gamma,$ if
$\int\limits_{\gamma}\rho(p)\,ds_{\widetilde{v}}(p)\geqslant 1$ for
any path $\gamma \in \Gamma.$ The latter is briefly written in the
form: $\rho\in {\rm adm}\,\Gamma.$  A {\it modulus} of the family
$\Gamma$ is a real-valued function
\begin{equation}\label{eq1A}
M(\Gamma):=\inf\limits_{\rho\in {\rm adm}\,\Gamma}\int\limits_{\Bbb
S}\rho^2(p)\,d\widetilde{v}(p)\,.
\end{equation}
Let $E,$ $F\subset {\Bbb S}$ be arbitrary sets. In the future,
everywhere by $\Gamma(E, F, D)$ we denote the family of all paths
$\gamma:[a,b]\rightarrow D,$ which join $E$ and $F$ in $D,$ that is,
$\gamma(a)\in E,$ $\gamma(b) \in F$ and $\gamma(t)\in D$ for $t\in
(a, \, b).$ Let $f:D\rightarrow {\Bbb S}_*,$ let $p_0\in f(D)$ and
let $0<r_1<r_2<d_0=\sup\limits_{p\in f(D)}\widetilde{h_*}(p, p_0).$
Now, we denote by $\Gamma_f(p_0, r_1, r_2)$ the family of all paths
$\gamma$ in $D$ such that $f(\gamma)\in \Gamma(S(p_0, r_1), S(p_0,
r_2), \widetilde{A}(p_0,r_1,r_2)).$
We say that {\it $f:D\rightarrow {\Bbb S}_*$ satisfies the inverse
Poletsky inequality} at a point $p_0\in \overline{f(D)},$ if the
relation
\begin{equation}\label{eq2*A}
M(\Gamma_f(p_0, r_1, r_2))\leqslant \int\limits_{f(D)\cap
\widetilde{A}(p_0, r_1, r_2)} Q(p)\cdot \eta^{2}(\widetilde{h_*}(p,
p_0))\, d\widetilde{v_*}(p)
\end{equation}
holds for any Lebesgue measurable function $\eta:
(r_1,r_2)\rightarrow [0,\infty ]$ such that
\begin{equation}\label{eqA2}
\int\limits_{r_1}^{r_2}\eta(r)\, dr\geqslant 1\,.
\end{equation}
Given a function $Q$ measurable with a respect to a measure
$\widetilde{v},$ we set
\begin{equation}\label{eq26}
q_{p_0}(r)=\frac{1}{r}\int\limits_{\widetilde{S}(p_0,
r)}Q(p)\,ds_{\widetilde{h_*}}(p)\,.
\end{equation}

\medskip
The following theorem holds.

\begin{theorem}\label{th1}
{\,\sl Let $D, D_*$ be domains in ${\Bbb S}$ and ${\Bbb S}_*,$
respectively, having compact closures $\overline{D}\subset {\Bbb S}$
and $\overline{D_*}\subset {\Bbb S}_*,$ in addition, $\partial D$
and $\partial D_*$ have a finite number of components. Let $Q:{\Bbb
S}_*\rightarrow (0, \infty)$ be a given function measurable with
respect to the measure $\widetilde{v_*}$ on ${\Bbb S}_*$,
$Q(p)\equiv 0$ in ${\Bbb S}_*\setminus f(D).$ Let $f:D\rightarrow
D_*$ be an open, discrete and closed mapping of $D$ onto $D_*$
satisfying the relation~(\ref{eq2*A}). If, for each point $p_0\in
\partial D_*$ there is $r_0=r_0(p_0)>0$ such that $q_{p_0}(r)<\infty$
for almost every $r\in (0, r_0),$ then $f$ has a continuous
extension $f:\overline{D}_P\rightarrow \overline{D_*}_P,$
$f(\overline{D}_P)=\overline{D_*}_P.$ }
\end{theorem}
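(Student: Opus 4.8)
The plan is to read the required continuous extension as a statement about chains of cuts. I fix a prime end $P\in E_D$ with a defining chain of cuts $\{\sigma_m\}$ and associated nested domains $d_1\supset d_2\supset\cdots$, and I study the behaviour of $f$ on the $d_m$. Since $f$ is open and each $d_m$ is connected, every $f(d_m)$ is a subdomain of $D_*$, the closures $\overline{f(d_m)}$ are nested continua, and hence $C(P,f):=\bigcap_{m}\overline{f(d_m)}$ is a non-empty continuum. First I would check that $C(P,f)\subset\partial D_*$: if some $y_0\in C(P,f)$ lay in $D_*$, there would be points $x_k\to I(P)\subset\partial D$ with $f(x_k)\to y_0\in D_*$, which is impossible for a closed map, since a closed $f$ onto $D_*$ cannot have interior cluster values along $\partial D$. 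Because $\partial D_*$ has finitely many components and $C(P,f)$ is connected, $C(P,f)$ is contained in a single component $\Sigma_0$ of $\partial D_*$; this is the candidate impression of the image prime end.

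Next I would extract the quantitative tool from (\ref{eq2*A}). For $p_0\in\partial D_*$ and $0<r_1<r_2<r_0$ I insert into (\ref{eq2*A}) the extremal function $\eta(r)=\bigl(r\,q_{p_0}(r)\,I(r_1,r_2)\bigr)^{-1}$, where $I(r_1,r_2)=\int_{r_1}^{r_2}\frac{dr}{r\,q_{p_0}(r)}$, which satisfies (\ref{eqA2}) with equality; passing to polar-type integration over the circles $\widetilde S(p_0,r)$ and using the definition (\ref{eq26}) of $q_{p_0}$ gives
\begin{equation}\label{planbound}
M(\Gamma_f(p_0,r_1,r_2))\leqslant \frac{1}{\int_{r_1}^{r_2}\frac{dr}{r\,q_{p_0}(r)}}\,.
\end{equation}
The hypothesis that $q_{p_0}(r)<\infty$ for a.e. $r\in(0,r_0)$ makes the integrand strictly positive a.e., so the right-hand side of (\ref{planbound}) is finite for every fixed $r_1<r_2<r_0$; moreover, since the integral only increases as $r_1\downarrow 0$, this bound stays below a finite constant $C^*$ as $r_1\to0$ with $r_2$ fixed. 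The good circles on which $q_{p_0}$ is finite are exactly the radii I will be free to use when cutting $D_*$ near $\Sigma_0$.

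The heart of the argument is to upgrade $C(P,f)$ to the impression of a genuine prime end $P^*$ of $D_*$ and to prove $f(x_k)\to P^*$ whenever $x_k\to P$. Working near $\Sigma_0$, I would use good radii to construct a chain of cuts $\{\delta_j\}$ in $D_*$ with domains $d_j^{\,\prime}\supset d_{j+1}^{\,\prime}$ whose closures shrink to $C(P,f)$, so that, once the chain axioms (i)--(iii) are verified, $C(P,f)=I(P^*)$. Convergence in the prime-end topology then amounts to showing that for every $d_{j}^{\,\prime}$ one has $f(x_k)\in d_j^{\,\prime}$ for all large $k$. If this failed, a subsequence of $f(x_k)$ would stay on the far side of a cut $\delta_{j_0}$ while still accumulating at $C(P,f)$; joining such image points to the ones that do enter $d_{j_0}^{\,\prime}$ produces, for $m\to\infty$, paths in $d_m$ whose images repeatedly cross a fixed ring $\widetilde A(p_0,r_1,r_2)$ around a point $p_0\in\Sigma_0$, that is, paths in $\Gamma_f(p_0,r_1,r_2)$ confined to $d_m$. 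Comparing the uniform finite bound (\ref{planbound}) against the modulus forced by this crossing configuration, which by the standard lower estimate for families connecting non-degenerate continua across a ring grows like $\log(r_2/r_1)$ as $r_1\to0$, yields the desired contradiction and pins the limit to $P^*$. The main obstacle lies exactly here: because $f$ has branch points, the images $f(\sigma_m)$ of the cuts need not be Jordan arcs, so building the cuts $\delta_j$ in $D_*$ and verifying that they genuinely separate the $f(d_m)$ (axiom (ii)) and satisfy $\widetilde{h_*}(\delta_j)\to\infty$ (axiom (iii)) requires the openness, discreteness and closedness of $f$ together with (\ref{planbound}); transferring the ring-crossing lower modulus estimate from $D_*$ back to the source family $\Gamma_f$ through the branched map is the delicate point. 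Independence of $P^*$ from the chosen chain $\{\sigma_m\}$ and from the sequence $x_k$ follows once convergence is established, by the equivalence of chains.

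Finally, for surjectivity I would use compactness. Since $\partial D$ has finitely many components and $\overline D$ is compact, $\overline{D}_P$ is a compact space, so the continuous extension has compact, hence closed, image in $\overline{D_*}_P$; as the image contains the dense subset $D_*=f(D)$, it is all of $\overline{D_*}_P$. Concretely, given a prime end $P^*\in E_{D_*}$ I pick $y_k\in D_*$ with $y_k\to P^*$ and $x_k\in D$ with $f(x_k)=y_k$; a subsequence of $x_k$ converges in $\overline{D}_P$ to some $\xi$, which cannot lie in $D$, for otherwise $y_k\to f(\xi)\in D_*$, so $\xi$ is a prime end and continuity gives $f(\xi)=P^*$.
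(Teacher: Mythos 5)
Your preparatory steps are sound: the extremal choice $\eta(r)=\bigl(r\,q_{p_0}(r)\,I(r_1,r_2)\bigr)^{-1}$ with $I(r_1,r_2)=\int_{r_1}^{r_2}\frac{dr}{r\,q_{p_0}(r)}$ is essentially the computation in the paper (which uses $\eta(r)=1/(Ir\widetilde{q}_{p_0}(r))$ with $\widetilde{Q}=\max\{Q,1\}$ and Lemma~\ref{lem2}), the containment of the cluster set in $\partial D_*$ via closedness matches the paper's use of boundary preservation, and your surjectivity paragraph coincides with the paper's final argument. But the heart of your proof has a genuine gap, and you have in fact named it yourself without resolving it: your contradiction pits an \emph{upper} bound on $M(\Gamma_f(p_0,r_1,r_2))$ — a family of paths in the source domain $D$ — against a ring-crossing \emph{lower} modulus estimate that lives in the image domain $D_*$. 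These are different families, and the inverse Poletsky inequality~(\ref{eq2*A}) transfers information in one direction only: from an image configuration to an upper bound on the source modulus. There is no mechanism, for a branched map, to convert a lower bound for $M(\Gamma(|C_k|,|\widetilde{C_k}|,D_*))$ into a lower bound for $M(\Gamma_f)$; for mappings satisfying~(\ref{eq2*A}) the source modulus is precisely the quantity that is small, so no contradiction arises. (Moreover the quantitative claim is off on both sides: with the ring fixed, the image family joining two continua crossing both circles has modulus bounded below by a \emph{constant}, not growing like $\log(r_2/r_1)$; and since the hypothesis is only $q_{p_0}(r)<\infty$ a.e., the bound $1/I(r_1,r_2)$ need not tend to $0$ as $r_1\to 0$, e.g.\ when $q_{p_0}(r)\sim 1/r$, so no race can be won on the image side either.)

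The paper closes exactly this gap by manufacturing the lower bound on the \emph{source} side, using two ingredients absent from your plan. First, item~1) of Proposition~\ref{pr1}: a conformal collar map $H$ of $U^*=U\cap D$ onto a ring $R$, extending to a homeomorphism of $\overline{U^*}_P$ onto $\overline{R}$; passing to $g=f\circ H^{-1}$ preserves~(\ref{eq2*A}) and replaces the possibly wild boundary of $D$ by $\partial R$, a union of two circles, which is weakly flat. Second, Lemma~\ref{lem9}: because $f$ is open, discrete and closed, the image paths $C_k$, $\widetilde{C_k}$ (joining $g(x_k)$, $g(y_k)$ to \emph{fixed} points in the chain domains of the two assumed distinct limits $P_1\ne P_2$) admit whole $f$-liftings $\alpha_k,\beta_k$; finiteness of the fibers over the fixed endpoints keeps $\alpha_k(1),\beta_k(1)$ at distance $\geqslant\delta_0$ from $\partial D$, so the truncated $H$-images $\alpha_k^*,\beta_k^*$ join points converging to $x_0\in\partial R$ with a set at definite distance from $x_0$. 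Weak flatness then makes $M(\Gamma(|\alpha_k^*|,|\beta_k^*|,R))$ exceed every $P>0$, while $\Gamma(|\alpha_k^*|,|\beta_k^*|,R)>\Gamma_g(p_0,r_1,r_2)$ yields the finite bound $C/I$ — that is the actual contradiction. This route also spares the paper your hardest construction: one never builds a new prime end $P^*$ with impression $C(P,f)$ or verifies chain axioms for image cuts (which, as you note, need not be Jordan under a branched map); it suffices that $\overline{D_*}_P$ is compact and metrizable (item~2) of Proposition~\ref{pr1}) and that the chain of $P_1$ may be taken on circles $\widetilde{S}(p_0,r_n)$ (item~3)), after which two subsequential limits already set up the argument above.
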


\medskip
The boundary of the domain $D$ is called {\it weakly flat} at the
point $x_0\in \partial D, $ if for any $P> 0$ and for any
neighborhood $U$ of a point $x_0 $ there is a neighborhood $V\subset
U$ of the same point such that $M(\Gamma(E, F, D))> P$ for any
continua $E, F \subset D,$ which intersect $\partial U$ and
$\partial V.$ The boundary of the domain $D$ is called weakly flat
if the corresponding property is fulfilled at any point of the
boundary $D.$ In the case of a good boundary of the domain $D,$ we
have the following version of Theorem~\ref{th1}.

\begin{theorem}\label{th3}
{\,\sl Let $D, D_*$ be domains in ${\Bbb S}$ and ${\Bbb S}_*,$
respectively, having compact closures $\overline{D}\subset {\Bbb S}$
and $\overline{D_*}\subset {\Bbb S}_*,$ in addition, $\partial D$
has a weakly flat boundary, and $\partial D_*$ have a finite number
of components. Let $Q:{\Bbb S}_*\rightarrow (0, \infty)$ be a given
function measurable with respect to the measure $\widetilde{v_*}$ on
${\Bbb S}_*$, $Q(p)\equiv 0$ in ${\Bbb S}_*\setminus f(D).$ Let
$f:D\rightarrow D_*$ be an open, discrete and closed mapping of $D$
onto $D_*$ satisfying the relation~(\ref{eq2*A}). If, for each point
$p_0\in \partial D_*$ there is $r_0=r_0(p_0)>0$ such that
$q_{p_0}(r)<\infty$ for almost every $r\in (0, r_0),$ then $f$ has a
continuous extension $f:\overline{D}\rightarrow \overline{D_*}_P,$
$f(\overline{D})=\overline{D_*}_P.$ }
\end{theorem}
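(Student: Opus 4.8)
The plan is to derive Theorem~\ref{th3} from Theorem~\ref{th1}. The two statements differ only in that, on the source domain, the prime end completion $\overline{D}_P$ is to be replaced by the ordinary completion $\overline{D}$; on the target the completion $\overline{D_*}_P$ is the same in both. It therefore suffices to produce a homeomorphism $\varphi\colon\overline{D}\to\overline{D}_P$ which is the identity on $D$ and which sends each $x_0\in\partial D$ to the unique prime end of $D$ whose impression equals $\{x_0\}$. Granting such a $\varphi$, Theorem~\ref{th1} supplies a continuous surjection $g\colon\overline{D}_P\to\overline{D_*}_P$, and the map $f:=g\circ\varphi\colon\overline{D}\to\overline{D_*}_P$ is continuous, coincides with the original $f$ on $D$ (since $\varphi|_D=\mathrm{id}$ and $g|_D=f$), and satisfies $f(\overline{D})=g(\varphi(\overline{D}))=g(\overline{D}_P)=\overline{D_*}_P$, which is precisely the assertion to be proved.

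Constructing $\varphi$ is where the weak flatness of $\partial D$ is used, and it rests on three facts about the prime ends of $D$: that the impression $I(P)$ of every prime end $P$ is a single point of $\partial D$; that every $x_0\in\partial D$ is the impression of some prime end; and that inequivalent chains never produce the same impression. For the first, suppose $I(P)$ contained two distinct points $a,b\in\partial D$, fix disjoint neighborhoods $U_a\ni a$ and $U_b\ni b$ in ${\Bbb S}$, and let $\{\sigma_m\}$ be a chain of cuts of $P$ with connected domains $d_m$, each of which meets both $U_a$ and $U_b$. Applying weak flatness at $a$, for any prescribed $P_0>0$ one gets a smaller neighborhood $V_a\subset U_a$ with $M(\Gamma(E,F,D))>P_0$ for all continua $E,F\subset D$ meeting $\partial U_a$ and $\partial V_a$; choosing such continua inside consecutive domains of the chain shows that the modulus of the path families separated by the $\sigma_m$ would have to be unbounded, which is impossible for fixed cuts $\sigma_m$. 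Hence $I(P)=\{x_0\}$ and we put $\varphi(x_0)=P$. The remaining two facts are the accessibility and the separation of boundary points, both of which weak flatness guarantees, and together they make $\varphi$ a well-defined bijection onto $\overline{D}_P$.

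It remains to prove that $\varphi$ is continuous; since $\overline{D}$ is compact and the prime end completion $\overline{D}_P$ is Hausdorff, a continuous bijection between them is automatically a homeomorphism. Continuity of $\varphi$ means that whenever $x_n\in D$ converges to $x_0\in\partial D$ in $\overline{D}$, the points $x_n$ converge to the prime end $\varphi(x_0)$ in $\overline{D}_P$, i.e. that for every defining domain $d_m$ of $\varphi(x_0)$ one has $x_n\in d_m$ for all sufficiently large $n$. This is once more a consequence of weak flatness at $x_0$: if infinitely many $x_n$ stayed outside $d_m$, they would be separated from the inner domains of the chain by the single cut $\sigma_m$, and yet weak flatness would permit joining them to $\sigma_m$ inside an arbitrarily small neighborhood of $x_0$ by continua whose connecting families have arbitrarily large modulus, which the fixed cut $\sigma_m$ cannot support. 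With $\varphi$ a homeomorphism, the composition $f=g\circ\varphi$ finishes the argument.

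The main obstacle is exactly this identification $\overline{D}\cong\overline{D}_P$, carried out in the previous two paragraphs: one must translate the modulus content of weak flatness into the purely topological statements that every impression collapses to a point and that convergence in $\overline{D}$ matches convergence in $\overline{D}_P$. Once this is in place the rest is formal, and the corresponding equivalence for weakly flat boundaries in the Euclidean and manifold settings (see, e.g., \cite{MRSY}, \cite{KR$_2$}) indicates that the needed version for Riemannian surfaces is the expected one.
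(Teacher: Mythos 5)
Your plan hinges on reducing Theorem~\ref{th3} to Theorem~\ref{th1} via a homeomorphism $\varphi:\overline{D}\rightarrow\overline{D}_P$, and this already fails at the level of hypotheses: Theorem~\ref{th1}, and the entire prime-end apparatus of Proposition~\ref{pr1} that you use on the source side (metrizability of $\overline{D}_P$, chains of cuts on circles, the conformal collar $H$), requires $\partial D$ to have \emph{finitely many components}, whereas Theorem~\ref{th3} imposes that condition only on $\partial D_*$ and assumes of $\partial D$ nothing but weak flatness. Weak flatness does not obviously force finitely many boundary components (a disk with punctures accumulating at a boundary point is a natural candidate for a weakly flat boundary with infinitely many components, since deleting points does not change moduli of path families), so you may not invoke Theorem~\ref{th1} for $D$, nor the properties of $\overline{D}_P$ you rely on. The paper's own proof avoids prime ends on the source entirely: it repeats the modulus argument of Theorem~\ref{th1} directly in $D$, lifting the paths $C_k$, $\widetilde{C_k}$ by Lemma~\ref{lem9} to $\alpha_k$, $\beta_k$ starting at $x_k$, $y_k$, using weak flatness of $\partial D$ at $x_0$ to obtain the lower bound~(\ref{eq7A}) on $M(\Gamma(|\alpha_k|, |\beta_k|, D))$, and contradicting it with the test-function estimate~(\ref{eq3B})--(\ref{eq7E}); no identification $\overline{D}\cong\overline{D}_P$ is ever needed.

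Even granting finitely many components, the identification itself --- which is the entire content of your proof --- is only gestured at, and the sketched modulus arguments do not close. For singleton impressions you conclude that the modulus ``would have to be unbounded, which is impossible for fixed cuts $\sigma_m$''; but a fixed cut supports path families of arbitrarily large modulus, and the genuine upper bound must come from the cuts \emph{shrinking} on circles $\widetilde{S}(z_0, r_m)$, $r_m\rightarrow 0$ (item~3 of Proposition~\ref{pr1}), via an annulus estimate for paths joining a fixed continuum $F\subset D\setminus\overline{d_{m_0}}$ to continua $E_m\subset d_m$, with the check that both $E_m$ and $F$ meet $\partial U_a$ and $\partial V_a$ as the definition of weak flatness demands. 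Worse, your ``separation'' (injectivity) claim cannot be obtained by applying weak flatness as you suggest: if two distinct prime ends had the same degenerate impression $\{x_0\}$, their chain domains would shrink to $x_0$, so continua lying inside them eventually cannot intersect the \emph{fixed} spheres $\partial U$, $\partial V$ at all, and the weak-flatness inequality simply does not apply to them; one needs a prior lemma (weak flatness implies local connectivity of $D$ at its boundary points) and then a separate topological argument with cuts on circles. The same defect appears in your continuity step (``which the fixed cut $\sigma_m$ cannot support'' is not an estimate). So the key equivalence $\overline{D}\cong\overline{D}_P$ remains unproved, several of its steps as stated would fail, and the surrounding literature you appeal to does not supply it in the form and generality your reduction requires.
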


\medskip
A result similar to the assertion of Theorem~\ref{th1} was
previously obtained for homeomorphisms in~\cite[Theorem~2]{RV$_2$}.
Also, under some more general assumptions, we obtained this
assertion on Riemannian manifolds, see~\cite[Theorem~1.4]{Sev$_3$}.

\section{Preliminaries}

Given a mapping $f:D\rightarrow {\Bbb S}_*$ and a set $E\subset
\overline{D}\subset{\Bbb S},$ we put
$$C(f, E)=\{y\in {\Bbb S}_*:\exists\, x\in E, x_k\in D: x_k\rightarrow x, f(x_k)
\rightarrow y, k\rightarrow\infty\}\,.$$
The following statement holds.

\begin{proposition}\label{pr1}
{\,\sl Assume that, a domain $D\subset {\Bbb S}$ has a finite number
of boundary components $\Gamma_1, \Gamma_2,\ldots, \Gamma_n\subset
\partial D.$ Then:

\medskip
1) for any $\Gamma_i,$ $i=1,2,\ldots, n$ there is a neighborhood
$U_i\subset {\Bbb S}$ and a conformal mapping $H$ of $U^*_i:=U_i\cap
D$ onto $R=\{z\in {\Bbb C}: 0\leqslant r_i<|z|<1\}$ such that
$\gamma_i:=\partial U_i^{\,*}\cap D$ is a closed Jordan path
$$C(H, \gamma_i) = \{z\in {\Bbb C}: |z| = 1\};\quad  C(H, \Gamma_i) =
\{z\in {\Bbb C}: |z| = r_i\}\,,$$
while $r_i=0$ if and only if $\Gamma$ degenerates into a point.
Moreover, $H$ extends to a homeomorphism of $\overline{U_i^{\,*}}_P$
onto $\overline{R},$ see \cite[Lemma~2]{RV$_2$};

\medskip
2) a space $\overline{D}_P$ is metrizable with some metric
$\rho:\overline{D}_P\times\overline{D}_P\rightarrow {\Bbb R}$ such
that, the convergence of any sequence $x_n\in D,$ $n=1,2,\ldots ,$
to some prime end $P\in E_D$ is equivalent to the convergence $x_n$
in one of spaces $\overline{U_i^{\,*}}_P,$ see
\cite[Remark~2]{RV$_2$};

\medskip
3) any prime end $P\in E_D$ contains a chain of cuts $\sigma_m,$
$m=1,2,\ldots,$ which belong to spheres $\widetilde{S}(z_0, r_m),$
$r_m\rightarrow 0$ as $m\rightarrow\infty,$
see~\cite[Remark~1]{RV$_2$};

\medskip
4) for any $P\subset E_D$ its impression $I(P)$ is a continuum in
$\partial D,$  while there is some unique $1\leqslant i\leqslant n$
such that $I(P)\subset \Gamma_i,$ see \cite[Proposition~1,
Remark~1]{RV$_2$}.}

\end{proposition}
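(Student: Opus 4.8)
The plan is to reduce all four assertions to the corresponding facts about plane domains and then to invoke the results of~\cite{RV$_2$} quoted in the statement, the only genuinely new point being the passage from the surface $\Bbb S$ to the plane. First I would use the conformal structure of $\Bbb S$: near each boundary component one may introduce isothermal coordinates, in which $ds_{\widetilde{h}}$ is a conformal multiple of the Euclidean length element; consequently conformal maps are available near $\partial D$ and the modulus~(\ref{eq1A}) is a conformal invariant. Since $\overline{D}$ is compact and $\partial D$ splits into the finitely many components $\Gamma_1,\dots,\Gamma_n$, each $\Gamma_i$ is a continuum, and I can choose pairwise disjoint collars $U_i$ so that $U_i^{\,*}=U_i\cap D$ is a doubly connected domain whose inner boundary is a closed Jordan path $\gamma_i=\partial U_i^{\,*}\cap D$ and whose outer boundary part is $\Gamma_i$.

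For assertion~1) I would uniformize each collar. A doubly connected domain is conformally equivalent to a round annulus $R=\{z\in{\Bbb C}: r_i<|z|<1\}$, degenerating to the punctured disk exactly when $\Gamma_i$ reduces to a point, i.e.\ when $r_i=0$; normalizing $H$ so that $\gamma_i$ corresponds to the outer circle, the cluster-set identities $C(H,\gamma_i)=\{|z|=1\}$ and $C(H,\Gamma_i)=\{|z|=r_i\}$ follow by construction and continuity. The extension of $H$ to a homeomorphism $\overline{U_i^{\,*}}_P\to\overline{R}$ is the Carath\'{e}odory boundary correspondence in the form established in~\cite[Lemma~2]{RV$_2$}.

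Assertions~2)--4) I would then obtain by transporting structure through the maps $H$. For~2), pulling back the Euclidean metric of $\overline{R}$ on each collar and patching it with a fixed metric on a compact core of $D$ yields a metric $\rho$ on $\overline{D}_P$; since distinct prime ends lie in distinct collars, convergence to $P\in E_D$ reduces to convergence in the relevant $\overline{U_i^{\,*}}_P$, which is~\cite[Remark~2]{RV$_2$}. For~3), pulling back the level circles $\{|z|=\rho_m\}$ produces cuts of $P$, which, using the local conformality of the metric, can be realized as arcs lying on geodesic spheres $\widetilde{S}(z_0,r_m)$ with $r_m\to 0$ at the corresponding boundary point $z_0$, as in~\cite[Remark~1]{RV$_2$}. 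For~4), the impression $I(P)=\bigcap_m\overline{d_m}$ is a nested intersection of continua, hence itself a continuum lying in $\partial D$; the disjointness of the collars forces it into a single component $\Gamma_i$, which is~\cite[Proposition~1, Remark~1]{RV$_2$}.

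The step I expect to be the main obstacle is the very first reduction: one must check that the classical Carath\'{e}odory theory of prime ends and the conformal uniformization of a collar survive the passage to a Riemannian surface whose boundary may be wild, i.e.\ that the hypotheses under which~\cite{RV$_2$} establishes assertion~1) are genuinely met here by the isothermal structure together with the compactness of $\overline{D}$ and the finiteness of the number of boundary components. Once~1) is secured in this form, assertions~2)--4) are formal consequences, obtained by conjugating the corresponding planar facts by the uniformizing maps $H$.
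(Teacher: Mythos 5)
Your proposal takes essentially the same route as the paper: the paper gives no self-contained proof of this proposition, but simply quotes each item from \cite[Lemma~2, Remarks~1--2, Proposition~1]{RV$_2$}, which are exactly the results your reduction rests on. Your surrounding sketch (doubly connected collars, uniformization to a round annulus, transport of the metric and of the cuts, nested intersection of continua) matches how those facts are established there, and you correctly identify the one substantive point --- that double connectivity of the collars and the applicability of Carath\'eodory theory require the compactness of $\overline{D}$ and the finiteness of the set of boundary components, which are precisely the hypotheses under which \cite{RV$_2$} operates.
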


\medskip Let $X$ and $X^{\,\prime}$ be metric spaces with metrics $d$ and $d^{\,\prime},$
respectively, and let $D$ be a domain in $X.$ Let $f:D\rightarrow
X^{\,\prime}$ be a discrete open mapping, let $\beta:
[a,\,b)\rightarrow X^{\,\prime}$ be a path, and let
$x\in\,f^{-1}\left(\beta(a)\right).$ A path $\alpha:
[a,\,c)\rightarrow D$ is called a {\it maximal $f$-lifting} of
$\beta$ starting at $x,$ if $(1)\quad \alpha(a)=x\,;$ $(2)\quad
f\circ\alpha=\beta|_{[a,\,c)};$ $(3)$\quad for
$c<c^{\prime}\leqslant b,$ there is no a path $\alpha^{\prime}:
[a,\,c^{\prime})\rightarrow D$ such that
$\alpha=\alpha^{\prime}|_{[a,\,c)}$ and $f\circ
\alpha^{\,\prime}=\beta|_{[a,\,c^{\prime})}.$ If $X$ and
$X^{\,\prime}$ are locally compact, $X$ is locally connected, and
$f:D \rightarrow X^{\,\prime}$ is discrete and open, then there is a
maximal $f$-lifting of $\beta$ starting at $x,$
see~\cite[Lemma~2.1]{SM}. The following statement is proved
in~\cite[Lemma~2.1]{Sev$_3$} (for the space ${\Bbb R}^n$ see, for
example, \cite[Theorem~3.7]{Vu}).

\medskip
\begin{lemma}\label{lem9}
{\sl Let $X$ and $X^{\,\prime}$ be metric spaces, let $X$ be locally
connected, let $X^{\,\prime}$ be locally compact, let $D$ be a
domain in $X,$ and let $f:D \rightarrow X^{\,\prime}$ be a discrete
open and closed mapping of $D$ onto $D^{\,\prime}\subset
X^{\,\prime}.$ Assume that $\overline{D}$ is compact. If $\beta:
[a,\,b)\rightarrow X^{\,\prime}$ is a path such that
$x\in\,f^{\,-1}(\beta(a)),$ then there is a whole $f$-lifting of
$\beta$ starting at $x,$ in other words, there is a path $\alpha:
[a,\,b)\rightarrow X$ such that $f(\alpha(t))=\beta(t)$ for any
$t\in [a,\,b).$ Moreover, if $\beta(t)$ has a limit
$\lim\limits_{t\rightarrow b-0}\beta(t):=B_0\in D^{\,\prime},$ then
$\alpha$ has a continuous extension to $b$ and $f(\alpha(b))=B_0.$ }
\end{lemma}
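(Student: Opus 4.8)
The plan is to begin from the existence of a \emph{maximal} $f$-lifting $\alpha:[a,c)\rightarrow D$ of $\beta$ starting at $x$, which is guaranteed by~\cite[Lemma~2.1]{SM} (local compactness of $X'$, local connectedness of $X$, and discreteness and openness of $f$), and then to prove that necessarily $c=b$. Arguing by contradiction, I would suppose $c<b$ and analyze the cluster set
$$L=\bigcap_{a\leqslant t<c}\overline{\alpha([t,c))}\,.$$
Since $\overline{D}$ is compact, each $\overline{\alpha([t,c))}$ is a nonempty compact connected subset of $\overline{D}$, and these sets are nested as $t\uparrow c$; hence their intersection $L$ is a nonempty compact continuum in $\overline{D}$.

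The decisive step, which I expect to be the main obstacle, is to show that $L$ avoids $\partial D$, so that $L\subset D$. Here the closedness of $f$ is essential. If there were a point $x_1\in L\cap\partial D$, one could pick $t_k\uparrow c$ with $\alpha(t_k)\rightarrow x_1$; then $f(\alpha(t_k))=\beta(t_k)\rightarrow\beta(c)\in D^{\,\prime}$, so that $\beta(c)\in C(f,\partial D)$. However, for a closed surjection $f:D\rightarrow D^{\,\prime}$ one has $C(f,\partial D)\cap D^{\,\prime}=\varnothing$, i.e.\ the boundary cluster set lies in $\partial D^{\,\prime}$; this contradiction forces $L\subset D$.

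Once $L\subset D$ is established, continuity of $f$ gives $f(L)=\{\beta(c)\}$, whence $L\subset f^{-1}(\beta(c))$. As $f$ is discrete, the fiber $f^{-1}(\beta(c))$ is discrete, hence totally disconnected, so a connected subset of it must reduce to a single point. Therefore $L=\{x_1\}$ and $\alpha(t)\rightarrow x_1\in D$ as $t\rightarrow c-0$. Putting $\alpha(c):=x_1$ we obtain $f(\alpha(c))=\beta(c)$. Because $f$ is open and discrete, $x_1$ admits a normal neighborhood on which $\beta$ can be lifted slightly beyond $c$; concatenating this local lifting with $\alpha$ produces a lifting defined on $[a,c^{\,\prime})$ with $c^{\,\prime}>c$, contradicting the maximality condition~(3) for $\alpha$. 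Hence $c=b$ and $\alpha:[a,b)\rightarrow D$ is a whole $f$-lifting.

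Finally, the ``moreover'' assertion follows by repeating the cluster-set argument at the endpoint $b$. If $\beta(t)\rightarrow B_0\in D^{\,\prime}$ as $t\rightarrow b-0$, then the cluster set of $\alpha$ at $b$ is again a continuum in $\overline{D}$; closedness of $f$ keeps it inside $D$, continuity forces its image to be $\{B_0\}$, and discreteness of $f$ collapses it to a single point $\alpha(b)\in D$ with $f(\alpha(b))=B_0$. This yields the desired continuous extension of $\alpha$ to $b$ and completes the proof.
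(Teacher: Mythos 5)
The paper does not prove this lemma inline at all: it imports it verbatim from \cite[Lemma~2.1]{Sev$_3$}, pointing to \cite[Theorem~3.7]{Vu} as the classical ${\Bbb R}^n$ model. Your argument reconstructs exactly the canonical proof behind those citations: a maximal $f$-lifting from \cite[Lemma~2.1]{SM}, the nested cluster set $L=\bigcap_{t<c}\overline{\alpha([t,c))}$, which is a continuum by compactness of $\overline{D}$, the use of closedness to push $C(f,\partial D)$ into $\partial D^{\,\prime}$ and hence keep $L$ inside $D$ (this is precisely the boundary-preservation fact the paper elsewhere quotes as \cite[Proposition~2.1]{Sev$_7$}), and discreteness of the fiber $f^{\,-1}(\beta(c))$ to collapse $L$ to a single point. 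The ``moreover'' part by repeating the argument at $b$ is also the standard conclusion. So in substance you have the right proof, and essentially the same one as the source.

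One step, however, is not justified in the stated generality and should be repaired. You extend the lifting past $c$ by claiming that ``$x_1$ admits a normal neighborhood on which $\beta$ can be lifted slightly beyond $c$.'' Normal neighborhoods for open discrete maps are a tool of local topological index theory on manifolds (or Riemannian surfaces); here $X$ is merely a locally connected metric space and $X^{\,\prime}$ a locally compact one, so no such neighborhood is available. The fix is already in your toolkit: having shown $\alpha$ extends continuously to $c$ with $\alpha(c)\in D$ and $f(\alpha(c))=\beta(c)$, apply \cite[Lemma~2.1]{SM} to the path $\beta|_{[c,\,b)}$ starting at $\alpha(c)$; its maximal $f$-lifting is defined on a nondegenerate interval $[c,\,c^{\,\prime\prime})$, and concatenating it with $\alpha|_{[a,\,c]}$ produces a lifting on $[a,\,c^{\,\prime\prime})$ with $c^{\,\prime\prime}>c$, contradicting property $(3)$ of maximality directly. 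A last cosmetic point: when you assert $\beta(c)\in D^{\,\prime}$ you are using the implicit hypothesis $\beta([a,\,b))\subset D^{\,\prime}$; the lemma as stated (in the paper too) is silent on this, but without it no whole lifting can exist, so you should make the assumption explicit.
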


The following analogue of Fubini's theorem for Riemannian surfaces
holds (see~\cite{Sev$_6$}).

\begin{lemma}\label{lem2}
{\sl Let $U$ be some normal neighborhood of the point $p_0\in {\Bbb
S},$ and let $Q:U\rightarrow [0, \infty]$ be a function measurable
with respect to the measure $\widetilde{h},$ and $d_0:={\rm
dist}\,(p_0,
\partial U):=\inf\limits_{p\in
\partial U}\widetilde{h}(p_0, p).$ Then, for any $0<r_0\leqslant d_0$
\begin{equation}\label{eq6}
\int\limits_{\widetilde{B}(p_0,
r_0)}Q(p)\,d\widetilde{v}(p)=\int\limits_0^{r_0}\int\limits_{\widetilde{S}(p_0,
r)}Q(p)\,ds_{\widetilde{h}}(p)\,dr\,,
\end{equation}
where $d\widetilde{v}(p)$ and $ds_{\widetilde{h}}$ are area and
length elements on ${\Bbb S},$ respectively, see~(\ref{eq1}), and
the disk $\widetilde{B}(p_0, r_0)$ and the circle
$\widetilde{S}(p_0, r)$ are defined in~(\ref{eq4}).}
\end{lemma}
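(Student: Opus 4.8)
The plan is to reduce the statement to the classical Fubini theorem by passing to geodesic polar (normal) coordinates centered at $p_0$. Since $U$ is a normal neighborhood, the exponential map $\exp_{p_0}$ is a diffeomorphism from a Euclidean disk in the tangent plane $T_{p_0}{\Bbb S}$ onto $U,$ and one may introduce polar coordinates $(r,\theta),$ $r\in(0,d_0),$ $\theta\in[0,2\pi),$ on $U\setminus\{p_0\}.$ In these coordinates the line element on a surface takes the form $ds^2=dr^2+\mathcal{G}(r,\theta)\,d\theta^2$ for some smooth positive function $\mathcal{G},$ where $\sqrt{\mathcal{G}(r,\theta)}=:f(r,\theta)$ solves the Jacobi equation with $f(0,\theta)=0$ and $\partial_r f(0,\theta)=1.$ Consequently the area element is $d\widetilde{v}(p)=f(r,\theta)\,dr\,d\theta,$ while the length element on the coordinate circle $\{r=\mathrm{const}\}$ is $ds_{\widetilde{h}}(p)=f(r,\theta)\,d\theta.$

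The crucial geometric point is the Gauss lemma: in a normal neighborhood the radial geodesics emanating from $p_0$ are distance-minimizing, so the radial coordinate $r$ coincides with the geodesic distance $\widetilde{h}(p,p_0).$ Hence the metric disk $\widetilde{B}(p_0,r_0)$ is exactly the set $\{0<r<r_0\}$ (together with the point $p_0,$ a null set), and the metric circle $\widetilde{S}(p_0,r)$ is exactly the coordinate circle $\{r=\mathrm{const}\}.$ This identification is precisely what allows the inner integral in~(\ref{eq6}) to be read off as an integral over $\theta.$

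With these identifications in hand, I would write
$$\int\limits_{\widetilde{B}(p_0,r_0)}Q(p)\,d\widetilde{v}(p)=\int\limits_0^{r_0}\int\limits_0^{2\pi}Q(r,\theta)\,f(r,\theta)\,d\theta\,dr\,,$$
where the equality of the left-hand side with the coordinate double integral is just the change-of-variables formula for the Riemannian measure, and the interchange of the order of integration is justified by the classical Fubini theorem applied to the nonnegative measurable integrand $Q\cdot f$ on the rectangle $(0,r_0)\times(0,2\pi).$ Recognizing the inner integral as $\int_{\widetilde{S}(p_0,r)}Q(p)\,ds_{\widetilde{h}}(p)$ for almost every $r$ then yields~(\ref{eq6}).

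The main obstacle is not the computation but the two measure-theoretic transfers that underlie it: first, that $\widetilde{v}$-measurability of $Q$ translates into Lebesgue measurability of the map $(r,\theta)\mapsto Q(r,\theta)f(r,\theta)$ in the chart, which follows since $\exp_{p_0}$ is a diffeomorphism and $f$ is smooth and positive on $(0,d_0)\times[0,2\pi);$ and second, that for almost every fixed $r$ the slice $\theta\mapsto Q(r,\theta)$ is $ds_{\widetilde{h}}$-measurable, so that the inner integral is well defined for a.e. $r$ — this is exactly the content of Fubini's theorem. Once these two points are granted, the identity is immediate.
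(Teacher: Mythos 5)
Your proof is correct and coincides in essence with the paper's: the paper itself gives no argument for Lemma~\ref{lem2}, deferring to \cite{Sev$_6$}, and the proof there is exactly the one you propose --- geodesic polar coordinates in a normal neighborhood, the Gauss lemma identifying the radial coordinate with the distance $\widetilde{h}(p,p_0)$ (so that metric disks and circles are the coordinate disks and circles), the expressions $d\widetilde{v}=\sqrt{\mathcal{G}}\,dr\,d\theta$ and $ds_{\widetilde{h}}=\sqrt{\mathcal{G}}\,d\theta,$ and the classical Fubini--Tonelli theorem applied to the nonnegative measurable integrand. The only cosmetic caveat is that a normal neighborhood is the diffeomorphic image of a star-shaped set, not necessarily of a round disk, but since $r_0\leqslant d_0={\rm dist}\,(p_0,\partial U)$ the restriction to $\widetilde{B}(p_0,r_0)$ renders this immaterial.
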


\section{Proof of Theorem~\ref{th1}}

\medskip
{\it Proof of Theorem~\ref{th1}.} Let $P\in E_D$ and let $\Gamma$ be
a component of $\partial D$ such that $I(P)\subset \Gamma$ (it
exists by item~4) of Proposition~\ref{pr1}). Let $U\subset {\Bbb S}$
be a neighborhood $\Gamma$ which corresponds to
Proposition~\ref{pr1}, and let $H$ be a corresponding conformal
mapping of a domain $U^{\,*}:=U\cap D$ onto the ring $R=\{z\in {\Bbb
C}: 0<r<|z|<1\}$ such that $\gamma:=\partial U^{\,*}\cap D$ is a
closed Jordan path,
$$C(H, \gamma) = \{z\in {\Bbb C}: |z| = 1\};\quad  C(H, \Gamma) =
\{z\in {\Bbb C}: |z| = r\}\,.$$
%
%
Let $d_n,$ $n=1,2,\ldots ,$ be a sequence of domains corresponding
to $P.$ It follows from the definition of $U^{\,*}$ that, all $d_n$
belongs to $U^{\,*}$ for sufficiently large $n\in {\Bbb N}.$ We may
assume that $d_1\subset U^*.$ Now, we set $$g:=f\circ H^{\,-1}\,.$$
Since $H$ is conformal, it preserves the modulus of families of
paths (see, e.g., \cite[Corollary~1.2]{Sev$_4$}). Thus, $g$
satisfies the relation~(\ref{eq2*A}), as well. Thus, for the proof
of Theorem~\ref{th1}, it is sufficient to establish the continuous
extension $\overline{g}:\overline{R}\rightarrow \overline{D_*}_P.$

\medskip
Let $x_0\in \partial R.$ Assume that the conclusion about the
continuous extension of the mapping $g$ to the point $x_0$ is not
correct. Since $\overline{D^{\,*}}$ is compact, due to item~2) of
Proposition~\ref{pr1} the space $\overline{D_*}_P$ is compact, as
well. Then there are sequences $x_k, y_k\in R,$ $k=1,2,\ldots,$ and
$P_1, P_2\in \overline{D_*}_P,$ $P_1\ne P_2,$ such that $x_k,
y_k\rightarrow x_0 $ as $k\rightarrow \infty,$ and $\rho(g(x_k),
P_1)\rightarrow 0,$ $\rho(g(y_k), P_2)\rightarrow 0$ as
$k\rightarrow\infty,$ where $\rho$ is some metric in
$\overline{D_*}_P$ (see item~2) in Proposition~\ref{pr1}). Since $f$
is closed, $f$ is boundary preserving (see
e.g.~\cite[Proposition~2.1]{Sev$_7$}). Thus, $P_1, P_2\in E_{D_*}.$

\medskip
Let $E_k, G_k$ $k=1,2,\ldots, $ be sequences of domains
corresponding to prime ends $P_1, P_2,$ respectively. Let us show
that there exists $k_0\in {\Bbb N}$ such that
\begin{equation}\label{eq12}
E_k\cap G_k=\varnothing\quad \forall\,\,k\geqslant k_0\,.
\end{equation}
Suppose the contrary, i.e., suppose that for every $l=1,2,\ldots$
there exists an increasing sequence $k_l,$ $l=1,2,\ldots,$ such that
$p_{k_l}\in E_{k_l}\cap G_{k_l},$ $l=1,2,\ldots .$ Now
$p_{k_l}\rightarrow P_1$ and $p_{k_l}\rightarrow P_2,$
$l\rightarrow\infty.$ Let $\rho$ be the metric on $\overline{D_*}_P$
defined in Proposition~\ref{pr1}. By the triangle inequality,
$$\rho(P_1, P_2)\leqslant \rho(P_1, x_{k_l})+\rho(x_{k_l}, P_2)
\rightarrow 0,\qquad l\rightarrow\infty\,,$$
that contradicts to the condition $P_1\ne P_2.$

\medskip
Due to Proposition~\ref{pr1}, we may consider that a chain of cuts
$\sigma_n,$ which corresponds to domains $E_k$ and the prime end
$P_1$ belongs to circles $\widetilde{S}(p_0, r_n),$ $p_0\in
\partial D_*,$ $r_n\rightarrow 0$ as
$n\rightarrow\infty.$ Since $\rho(g(x_k), P_1)\rightarrow 0,$ there
is a number $k_1\in {\Bbb N}$ such that $g(x_k)\in E_1$ for any
$k\geqslant k_1.$ Similarly, there is a number $k_2\in {\Bbb N}$
such that $g(x_k)\in E_2$ for any $k\geqslant k_2.$ Join points
$g(x_k)$ and $g(x_{k_2})$ inside $E_2$ by a path $C_k:[0,
1]\rightarrow E_2$ such that $C_k(0)=g(x_k),$ $C_k(1)=g(x_{k_2}).$

\medskip
Similarly, there is a number $k_3\in {\Bbb N}$ such that $g(y_k)\in
G_1$ for any $k\geqslant k_3.$ Join points $g(y_k)$ and $g(y_{k_3})$
inside $G_1$ by a path $\widetilde{C_k}:[0, 1]\rightarrow G_1$ such
that $\widetilde{C_k}(0)=g(y_k),$ $\widetilde{C_k}(1)=g(y_{k_3}),$
see Figure~\ref{fig1} on this occasion.
\begin{figure}[h]
\centerline{\includegraphics[scale=0.4]{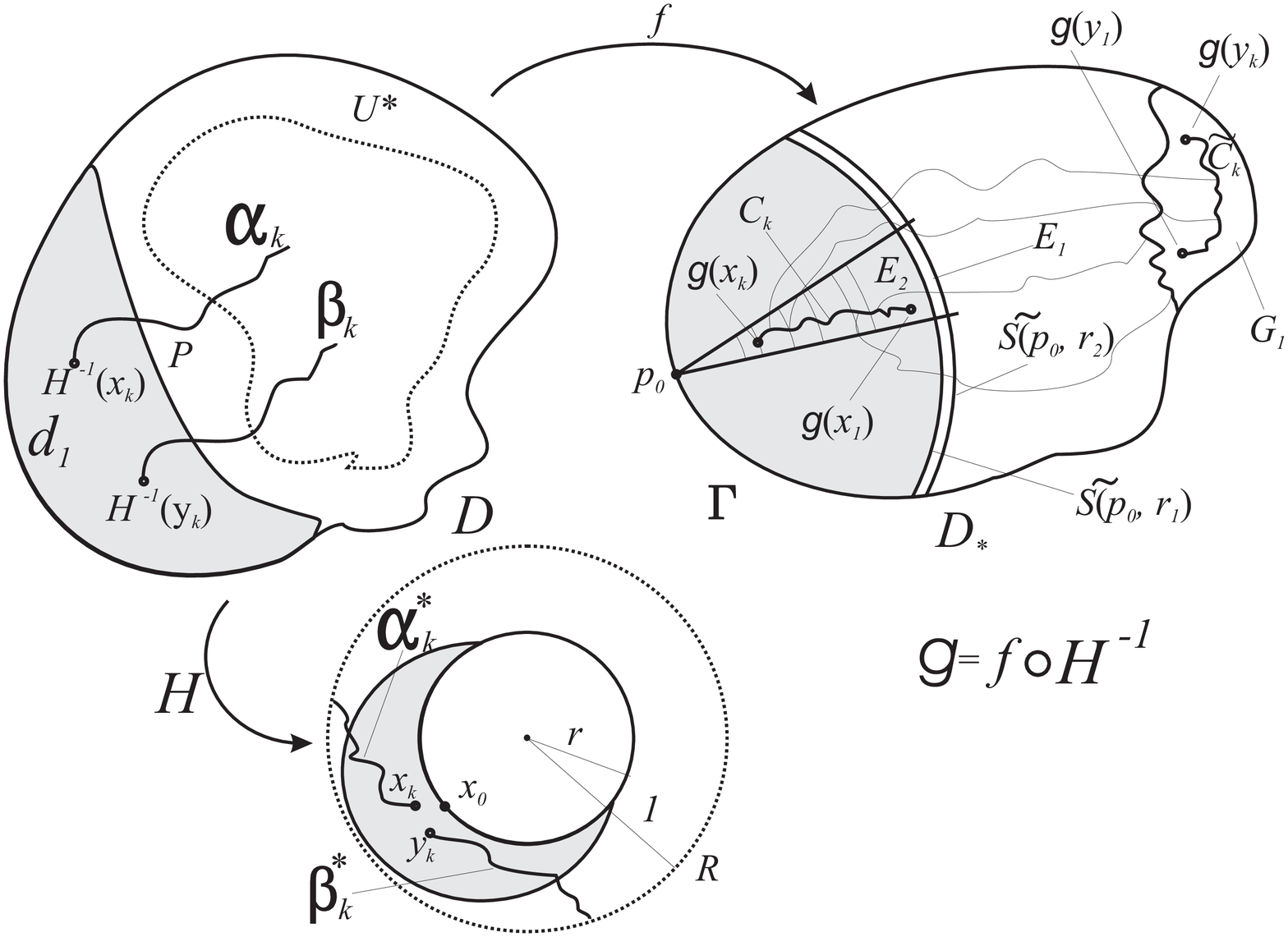}} \caption{To
the proof of Theorem~\ref{th1}}\label{fig1}
\end{figure}

\medskip
Given a path $\gamma:[a, b]\rightarrow {\Bbb S},$ we set
$$|\gamma|:=\{p\in {\Bbb S}: \exists\,t\in[a, b]:
\gamma(t)=p\}\,.$$
Let us to show that
\begin{equation}\label{eq11}
\Gamma(|C_k|, \widetilde{|C_k}|, D_*)>\Gamma(\widetilde{S}(p_0,
r_1), \widetilde{S}(p_0, r_2), \widetilde{A}(p_0, r_1, r_2))\,,
\end{equation}
where $\widetilde{A}(\widetilde{p_0}, r_1, r_2)$ is defined
in~(\ref{eq1**}).
Assume that $\gamma\in \Gamma(C_k, \widetilde{C_k}, D_*),$
$\gamma:[0, 1]\rightarrow D_*.$ By the construction, $|\gamma|\cap
E_2\ne\varnothing\ne |\gamma|\cap (D_*\setminus E_2).$ Thus,
\begin{equation}\label{eq13}
|\gamma|\cap \partial E_2\ne\varnothing
\end{equation} (see \cite[Theorem 1, $\S\,$46, item I]{Ku}).
Thus, there is $t_1\in [0, 1]$ such that $\gamma(t_1)\in \partial
E_2.$ Since $E_2\subset E_1,$ we have that $\overline{E_2}\subset
\overline{E_1}.$ Thus, either $\gamma(t_1)\in E_1,$ or
$\gamma(t_1)\in \partial E_1\cap D_*\subset
\widetilde{S}(\widetilde{p_0}, r_1).$ The second case is impossible,
because  Observe that, $t_2<1$ because $\gamma(t_1)\in
\partial E_2\cap D_*\subset\widetilde{S}(\widetilde{p_0},
r_2).$ Now, $\gamma(t_1)\in E_1.$ By the condition~(\ref{eq12}),
$\gamma(t_1)\not\in G_1.$ Now, $t_1\ne 1.$ Set
$\gamma_1:=\gamma|_{[t_1, 1]}.$ Without loss of generality, we may
assume that $\gamma(t)\not\in \partial E_2$ for $t\in (t_1, 1].$

\medskip
Arguing similarly, we obtain that $|\gamma_1|\cap
E_1\ne\varnothing\ne |\gamma|\cap (D_*\setminus 12).$ Thus,
\begin{equation}\label{eq13A}
|\gamma_1|\cap \partial E_1\ne\varnothing
\end{equation}
(see \cite[Theorem 1, $\S\,$46, item I]{Ku}). Now, there is $t_2\in
(t_1, 1)$ such that $\gamma(t_2)\in \partial E_1.$ Without loss of
generality, we may assume that $\gamma(t)\in E_1$ for any $t\in
[t_1, t_2].$ Set $\gamma_2:=\gamma_1|_{[t_1, t_2]}.$ Now $\gamma_2$
is a subpath of $\gamma$ such that $\gamma(t_1)\in \partial E_1\cap
D_*\subset \widetilde{S}(\widetilde{p_0}, r_1)$ and $\gamma(t_2)\in
\partial E_2\cap D_*\subset \widetilde{S}(p_0,
r_2),$ while $\gamma(t)\in \widetilde{A}(p_0, r_1, r_2),$ that
proves~(\ref{eq11}).

\medskip
By Lemma~\ref{lem9} paths $C_k$ and $\widetilde{C_k}$ have whole
$f$-liftings $\alpha_k$ and $\beta_k$ in $D$ starting at points
$H^{\,-1}(x_k)$ and $H^{\,-1}(y_k),$ respectively. Observe that
points $f(H^{\,-1}(x_1))$ and $f(H^{\,-1}(y_1))$ cannot have more
than a finite number of pre-images in the domain $D$ under the
mapping $f.$ Indeed, by Proposition~2.1 in~\cite{Sev$_7$}, $f$ is
proper, so that $f^{\,-1}(f(H^{\,-1}(x_1)))$ is a compact set in
$D.$ If there is $z_m\in f^{\,-1}(f(H^{\,-1}(x_1))),$
$m=1,2,\ldots,$ $z_n\ne z_k$ for $n\ne k,$ then we may find a
subsequence $z_{m_p},$ $p=1,2,\ldots ,$ such that
$z_{m_p}\rightarrow z_0$ as $p\rightarrow \infty$ for some $z_0\in
D.$ The latter contradicts the discreteness of $f.$ Thus,
$f^{\,-1}(f(H^{\,-1}(x_1)))$ is finite. Similarly,
$f^{\,-1}(f(H^{\,-1}(y_1)))$ is finite. Thus, there is $K_0\in {\Bbb
N},$ $K_0>\max\{k_0, k_1, k_2\},$ and $\delta_0>0$ such that $${\rm
dist}(\alpha_k(1), \partial D)={\rm
dist}(f^{\,-1}(f(H^{\,-1}(x_1))), \partial D)\geqslant \delta_0$$
and $${\rm dist}(\beta_k(1), \partial D)={\rm
dist}(f^{\,-1}(f(H^{\,-1}(y_1))), \partial D)\geqslant \delta_0$$
for $k\geqslant K_0.$ Now we set
$$t_k:=\sup\{t\in [0, 1]: \alpha_k(t)\in U^{\,*}\}\,,\quad
p_k:=\sup\{t\in [0, 1]: \beta_k(t)\in U^{\,*}\}\,.$$
By the proving above and by the definition of the mapping $H,$ we
obtain that
\begin{equation}\label{eq1}
{\rm dist\,}(H(\alpha_k(t_k)), x_0)\geqslant r_0\,,\quad {\rm
dist\,}(H(\beta_k(t_k)), x_0)\geqslant r_0\,,\quad k\geqslant K_0
\end{equation}
for some $r_0>0.$ Set
$$\alpha^{\,*}_k:=H\left(\alpha_k|_{[0, t_k]}\right)\,,\qquad
\beta^{\,*}_k:=H\left(\beta_k|_{[0, p_k]}\right)\,.$$
Since $R$ is a union of two circles which are $C^1$-manifolds, $R$
has a weakly flat boundary (see e.g.~\cite[Theorems~17.10,
17.12]{Va}). Thus, for any $P>0$ there is $k=k_P\geqslant K_0$ such
that
\begin{equation}\label{eq7}
M(\Gamma(|\alpha^{\,*}_k|, |\beta^{\,*}_k|,
R))>P\qquad\forall\,\,k\geqslant k_P\,.
\end{equation}

\medskip
We show that the condition~(\ref{eq7}) contradicts~(\ref{eq2*A}). By
the construction,
\begin{equation}\label{eq2}
g(\alpha^{\,*}_k)\subset C_k, \qquad g(\beta^{\,*}_k)\subset
\widetilde{C_k}\,.
\end{equation}
By~(\ref{eq11}) and~(\ref{eq2}),
$$g(\Gamma(|\alpha^{\,*}_k|, |\beta^{\,*}_k|,
R))\subset \Gamma(|C_k|, \widetilde{|C_k}|,
D_*)>\Gamma(\widetilde{S}(\widetilde{p_0}, r_1), \widetilde{S}(p_0,
r_2), \widetilde{A}(p_0, r_1, r_2))\,.$$
Thus,
$$\Gamma(|\alpha^{\,*}_k|, |\beta^{\,*}_k|,
R)>\Gamma_g(p_0, r_1, r_2)\,.$$
By the definition of a mapping $f$ in~(\ref{eq2*A}) and due to
remarks made in the begin of the proof, we obtain that
\begin{equation}\label{eq3}
M(\Gamma(|\alpha^{\,*}_k|, |\beta^{\,*}_k|, R))\leqslant
M(\Gamma_g(p_0, r_1, r_2))\leqslant \int\limits_{D_*\cap
\widetilde{A}(p_0, r_1, r_2)} Q(p)\cdot \eta^{2}(\widetilde{h_*}(p,
p_0))\, d\widetilde{v_*}(p)\,.
\end{equation}
Set $\widetilde{Q}(p)=\max\{Q(p), 1\}$ and
$$\widetilde{q}_{p_0}(r)=\int\limits_{S(p_0,
r)}\widetilde{Q}(p)\,ds_{\widetilde{h_*}}(p)\,.$$ Now, we have also
that $\widetilde{q}_{p_0}(r)\ne \infty$ for any $r\in [r_1,r_2].$
Set
$$I=I(p_0,r_1,r_2)=\int\limits_{r_1}^{r_2}\
\frac{dr}{r\widetilde{q}_{p_0}(r)}\,.$$
Observe that $I\ne 0,$ because $\widetilde{q}_{p_0}(r)\ne \infty$
for any $r\in [r_1, r_2].$ Besides that, note that $I\ne\infty,$
since, for some constant $C>0,$
$$I\leqslant C\cdot\log\frac{r_2}{r_1}<\infty\,,\quad i=1,2, \ldots, p\,.$$
Now, we put
$$\eta(r)=\begin{cases}
\frac{1}{Ir\widetilde{q}_{p_0}(r)}\,,&
r\in [r_1,r_2]\,,\\
0,& r\not\in [r_1,r_2]\,.
\end{cases}$$
Observe that, a function~$\eta$ satisfies the
condition~$\int\limits_{r_1}^{r_2}\eta(r)\,dr=1,$ therefore it can
be substituted into the right side of the inequality~(\ref{eq3})
with the corresponding values $f,$ $r_1$ and $r_2.$

On the other hand, by Lemma~\ref{lem2},
$$\int\limits_{D_*\cap \widetilde{A}(p_0, r_1,
r_2)}Q(p)\cdot \eta^{2}(\widetilde{h_*}(p, p_0))\,
d\widetilde{v_*}(p)=$$
\begin{equation}\label{eq7C}
= \int\limits_{r_1}^{r_2}\int\limits_{S(p_0,
r)}Q(y)\eta^2(\widetilde{h_*}(p,
p_0))\,ds_{\widetilde{h_*}}(p)\,dr\,\leqslant
\end{equation}
$$\leqslant\frac{C}{I^2}\int\limits_{r_1}^{r_2}r
\widetilde{q}_{p_0}(r)\cdot
\frac{dr}{r^2\widetilde{q}_{p_0}(r)}=\frac{C}{I}<\infty\,,$$
where $C$ is some constant. Due to~(\ref{eq3}) and~(\ref{eq7C}), we
obtain that
\begin{equation}\label{eq7B}
M(\Gamma(|\alpha^{\,*}_k|, |\beta^{\,*}_k|, R))\leqslant
\frac{C}{I}<\infty\end{equation}
for sufficiently large $k\in {\Bbb N}.$ The relation~(\ref{eq7B})
contradicts the condition~(\ref{eq7}). The contradiction obtained
above refutes the assumption that there is no limit of the mapping
$f$ at the point $x_0.$

It remains to show that
$\overline{f}(\overline{D}_P)=\overline{D_*}_P.$ Obviously
$\overline{f}(\overline{D}_P)\subset\overline{D_*}_P.$ Let us to
show that $\overline{D_*}_P\subset \overline{f}(\overline{D}_P).$
Indeed, let $y_0\in \overline{D_*}_P.$ Then either $y_0\in D_*,$ or
$y_0\in E_{D_*}.$ If $y_0\in D_*,$ then $y_0=f(x_0)$ and $y_0\in
\overline{f}(\overline{D}),$ because $f$ maps $D$ onto $D_*.$
Finally, let $y_0\in E_{D_*}.$ Due to Proposition~\ref{pr1}, there
is a sequence $y_k\in D_*$ such that $\rho(y_k, y_0)\rightarrow 0$
as $k\rightarrow\infty,$ $y_k=f(x_k)$ and $x_k\in D,$ where $\rho$
is defined in Proposition~\ref{pr1}. Since $\overline{D}$ is
compact, we may assume that $x_k\rightarrow x_0,$ where
$x_0\in\overline{D}_P.$ Note that $x_0\in E_D,$ because $f$ is open.
Thus $f(x_0)=y_0\in \overline{f}(E_D)\subset
\overline{f}(\overline{D}_P).$ Theorem is completely proved.~$\Box$

\medskip
By Lemma~\ref{lem2} we immediately obtain the following consequence
from Theorem~\ref{th1}.

\medskip
\begin{corollary}\label{cor1}{\sl
The statement of Theorem~\ref{th1} remains true if, in this theorem,
we replace the corresponding condition on $Q$ by the condition:
$Q\in L^1(D_*).$}
\end{corollary}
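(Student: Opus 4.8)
The plan is to show that the hypothesis $Q\in L^1(D_*)$ implies, for each $p_0\in\partial D_*,$ the pointwise condition required in Theorem~\ref{th1}, namely that $q_{p_0}(r)<\infty$ for almost every $r$ in some interval $(0, r_0);$ the assertion then follows verbatim from Theorem~\ref{th1}. The whole argument is a single application of the coarea-type identity of Lemma~\ref{lem2}, which is why the corollary is stated as an immediate consequence.

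First I would fix an arbitrary point $p_0\in\partial D_*.$ Since $\overline{D_*}$ is compact, $p_0$ admits a normal neighborhood $U\subset {\Bbb S}_*,$ and I would choose $r_0=r_0(p_0)>0$ small enough that $\widetilde{B}(p_0, r_0)\subset U$ (it suffices to take $r_0\leqslant {\rm dist}(p_0, \partial U)$). Applying Lemma~\ref{lem2} on the surface ${\Bbb S}_*$ to the function $Q$ (which is extended by zero outside $f(D)\subset D_*$) yields

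$$\int\limits_{\widetilde{B}(p_0, r_0)}Q(p)\,d\widetilde{v_*}(p)=\int\limits_0^{r_0}\int\limits_{\widetilde{S}(p_0, r)}Q(p)\,ds_{\widetilde{h_*}}(p)\,dr\,.$$

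The left-hand side is finite because $Q\in L^1(D_*)$ and $\widetilde{B}(p_0, r_0)\cap D_*\subset D_*,$ while $Q\equiv 0$ off $f(D).$ Hence the iterated integral on the right is finite as well.

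Next I would rewrite the inner integral by means of the definition~(\ref{eq26}): since $\int\limits_{\widetilde{S}(p_0, r)}Q(p)\,ds_{\widetilde{h_*}}(p)=r\,q_{p_0}(r),$ the finiteness above reads $\int\limits_0^{r_0}r\,q_{p_0}(r)\,dr<\infty.$ A nonnegative measurable function with finite integral is finite almost everywhere, so $r\,q_{p_0}(r)<\infty$ for a.e.\ $r\in(0, r_0);$ as $r>0$ on this interval, this gives $q_{p_0}(r)<\infty$ for almost every $r\in(0, r_0).$ Thus the hypotheses of Theorem~\ref{th1} are met for every boundary point $p_0,$ and its conclusion is precisely what is to be proved.

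There is essentially no hard step here; the only point requiring mild care is the legitimacy of invoking Lemma~\ref{lem2}, which presupposes a normal neighborhood of $p_0$ containing the disk $\widetilde{B}(p_0, r_0).$ This is guaranteed by the compactness of $\overline{D_*},$ so the corollary is a direct consequence of Lemma~\ref{lem2} combined with the integrability of $Q.$~$\Box$
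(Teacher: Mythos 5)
Your proposal is correct and follows essentially the same route as the paper: the paper justifies Corollary~\ref{cor1} precisely by the observation that Lemma~\ref{lem2} (the Fubini-type identity) converts $Q\in L^1(D_*)$ into the a.e.\ finiteness of $q_{p_0}(r)$ required in Theorem~\ref{th1}. Your write-up merely spells out the details (choice of the normal neighborhood, the identity $\int_{\widetilde{S}(p_0,r)}Q\,ds_{\widetilde{h_*}}=r\,q_{p_0}(r),$ and the a.e.\ finiteness of an integrable nonnegative function) that the paper leaves implicit in its one-line proof.
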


\medskip
{\it Proof of Theorem~\ref{th3}.} Since the proof is very similar to
the proof of Theorem~\ref{th1}, we restrict us by the sketch of the
proof. Let $x_0\in \partial D.$ Assume that the conclusion about the
continuous extension of the mapping $g$ to the point $x_0$ is not
correct. Since $\overline{D^{\,*}}$ is compact, due to item~2) of
Proposition~\ref{pr1} the space $\overline{D_*}_P$ is compact, as
well. Then there are sequences $x_k, y_k\in R,$ $k=1,2,\ldots,$ and
$P_1, P_2\in \overline{D_*}_P,$ $P_1\ne P_2,$ such that $x_k,
y_k\rightarrow x_0 $ as $k\rightarrow \infty,$ and $\rho(g(x_k),
P_1)\rightarrow 0,$ $\rho(g(y_k), P_2)\rightarrow 0$ as
$k\rightarrow\infty,$ where $\rho$ is some metric in
$\overline{D_*}_P$ (see item~2) in Proposition~\ref{pr1}). Since $f$
is closed, $f$ is boundary preserving (see
e.g.~\cite[Proposition~2.1]{Sev$_7$}). Thus, $P_1, P_2\in E_{D_*}.$

\medskip
Let $E_k, G_k$ $k=1,2,\ldots, $ be sequences of domains
corresponding to prime ends $P_1, P_2,$ respectively. Similarly to
the relation~(\ref{eq12}), we may show that
\begin{equation}\label{eq12A}
E_k\cap G_k=\varnothing\quad \forall\,\,k\geqslant k_0\,.
\end{equation}

\medskip
Due to Proposition~\ref{pr1}, we may consider that a chain of cuts
$\sigma_n,$ which corresponds to domains $E_k$ and the prime end
$P_1$ belongs to circles $\widetilde{S}(p_0, r_n),$ $p_0\in
\partial D_*,$ $r_n\rightarrow 0$ as
$n\rightarrow\infty.$ Since $\rho(g(x_k), P_1)\rightarrow 0,$ there
is a number $k_1\in {\Bbb N}$ such that $g(x_k)\in E_1$ for any
$k\geqslant k_1.$ Similarly, there is a number $k_2\in {\Bbb N}$
such that $g(x_k)\in E_2$ for any $k\geqslant k_2.$ Join points
$g(x_k)$ and $g(x_{k_2})$ inside $E_2$ by a path $C_k:[0,
1]\rightarrow E_2$ such that $C_k(0)=g(x_k),$ $C_k(1)=g(x_{k_2}).$

\medskip
Similarly, there is a number $k_3\in {\Bbb N}$ such that $g(y_k)\in
G_1$ for any $k\geqslant k_3.$ Join points $g(y_k)$ and $g(y_{k_3})$
inside $G_1$ by a path $\widetilde{C_k}:[0, 1]\rightarrow G_1$ such
that $\widetilde{C_k}(0)=g(y_k),$ $\widetilde{C_k}(1)=g(y_{k_3}).$

Similarly to the relation~(\ref{eq11}), we prove that
 show that
\begin{equation}\label{eq11A}
\Gamma(|C_k|, \widetilde{|C_k}|, D_*)>\Gamma(\widetilde{S}(p_0,
r_1), \widetilde{S}(p_0, r_2), \widetilde{A}(p_0, r_1, r_2))\,,
\end{equation}
where $\widetilde{A}(\widetilde{p_0}, r_1, r_2)$ is defined
in~(\ref{eq1**}).

\medskip
By Lemma~\ref{lem9} paths $C_k$ and $\widetilde{C_k}$ have whole
$f$-liftings $\alpha_k$ and $\beta_k$ in $D$ starting at points
$x_k$ and $y_k,$ respectively. Observe that points $f(x_1)$ and
$f(y_1)$ cannot have more than a finite number of pre-images in the
domain $D$ under the mapping $f.$ Indeed, by Proposition~2.1
in~\cite{Sev$_7$}, $f$ is proper, so that $f^{\,-1}(f(x_1))$ is a
compact set in $D.$ If there is $z_m\in f^{\,-1}(f(x_1)),$
$m=1,2,\ldots,$ $z_n\ne z_k$ for $n\ne k,$ then we may find a
subsequence $z_{m_p},$ $p=1,2,\ldots ,$ such that
$z_{m_p}\rightarrow z_0$ as $p\rightarrow \infty$ for some $z_0\in
D.$ The latter contradicts the discreteness of $f.$ Thus,
$f^{\,-1}(f(x_1))$ is finite. Similarly, $f^{\,-1}(f(y_1))$ is
finite. Thus, there is $K_0\in {\Bbb N},$ $K_0>\max\{k_0, k_1,
k_2\},$ and $\delta_0>0$ such that
\begin{equation}\label{eq1C}
{\rm dist}(\alpha_k(1),
\partial D)\geqslant \delta_0
\end{equation}
and
\begin{equation}\label{eq1B}
{\rm dist}(\beta_k(1), \partial D)\geqslant \delta_0
\end{equation}
for $k\geqslant K_0.$
Thus, due to the conditions~(\ref{eq1C}) and~(\ref{eq1B}), for any
$P>0$ there is $k=k_P\geqslant K_0$ such that
\begin{equation}\label{eq7A}
M_{\alpha}(\Gamma(|\alpha_k|, |\beta_k|,
D))>P\qquad\forall\,\,k\geqslant k_P\,.
\end{equation}

\medskip
We show that the condition~(\ref{eq7A}) contradicts~(\ref{eq2*A}).
By~(\ref{eq11A}),
$$f(\Gamma(|\alpha_k|, |\beta_k|,
D))=\Gamma(|C_k|, \widetilde{|C_k}|,
D_*)>\Gamma(\widetilde{S}(\widetilde{p_0}, r_1), \widetilde{S}(p_0,
r_2), \widetilde{A}(p_0, r_1, r_2))\,.$$
Thus,
$$\Gamma(|\alpha_k|, |\beta_k|,
D)>\Gamma_f(p_0, r_1, r_2)\,.$$
By the definition of a mapping $f$ in~(\ref{eq2*A}), we obtain that
\begin{equation}\label{eq3B}
M(\Gamma(|\alpha_k|, |\beta_k|, D))\leqslant M(\Gamma_f(p_0, r_1,
r_2))\leqslant \int\limits_{D_*\cap \widetilde{A}(p_0, r_1, r_2)}
Q(p)\cdot \eta^{2}(\widetilde{h_*}(p, p_0))\, d\widetilde{v_*}(p)\,.
\end{equation}
Set $\widetilde{Q}(p)=\max\{Q(p), 1\}$ and
$$\widetilde{q}_{p_0}(r)=\int\limits_{S(p_0,
r)}\widetilde{Q}(p)\,ds_{\widetilde{h_*}}(p)\,.$$ Now, we have also
that $\widetilde{q}_{p_0}(r)\ne \infty$ for any $r\in [r_1,r_2].$
Set
$$I=I(p_0,r_1,r_2)=\int\limits_{r_1}^{r_2}\
\frac{dr}{r\widetilde{q}_{p_0}(r)}\,.$$
Observe that $I\ne 0,$ because $\widetilde{q}_{p_0}(r)\ne \infty$
for any $r\in [r_1, r_2].$ Besides that, note that $I\ne\infty,$
since, for some constant $C>0,$
$$I\leqslant C\cdot\log\frac{r_2}{r_1}<\infty\,,\quad i=1,2, \ldots, p\,.$$
Now, we put
$$\eta(r)=\begin{cases}
\frac{1}{Ir\widetilde{q}_{p_0}(r)}\,,&
r\in [r_1,r_2]\,,\\
0,& r\not\in [r_1,r_2]\,.
\end{cases}$$
Observe that, a function~$\eta$ satisfies the
condition~$\int\limits_{r_1}^{r_2}\eta(r)\,dr=1,$ therefore it can
be substituted into the right side of the inequality~(\ref{eq3B})
with the corresponding values $f,$ $r_1$ and $r_2.$

On the other hand, by Lemma~\ref{lem2},
$$\int\limits_{D_*\cap \widetilde{A}(p_0, r_1,
r_2)}Q(p)\cdot \eta^{2}(\widetilde{h_*}(p, p_0))\,
d\widetilde{v_*}(p)=$$
\begin{equation}\label{eq7D}
= \int\limits_{r_1}^{r_2}\int\limits_{S(p_0,
r)}Q(y)\eta^2(\widetilde{h_*}(p,
p_0))\,ds_{\widetilde{h_*}}(p)\,dr\,\leqslant
\end{equation}
$$\leqslant\frac{C}{I^2}\int\limits_{r_1}^{r_2}r
\widetilde{q}_{p_0}(r)\cdot
\frac{dr}{r^2\widetilde{q}_{p_0}(r)}=\frac{C}{I}<\infty\,,$$
where $C$ is some constant. Due to~(\ref{eq3B}) and~(\ref{eq7C}), we
obtain that
\begin{equation}\label{eq7E}
M(\Gamma(|\alpha_k|, |\beta_k|, D))\leqslant
\frac{C}{I}<\infty\end{equation}
for sufficiently large $k\in {\Bbb N}.$ The relation~(\ref{eq7E})
contradicts the condition~(\ref{eq7A}). The contradiction obtained
above refutes the assumption that there is no limit of the mapping
$f$ at the point $x_0.$

It remains to show that
$\overline{f}(\overline{D})=\overline{D_*}_P.$ This fact may be
established similarly to the last part of the proof of
Theorem~\ref{th1}.~$\Box$

\medskip
By Lemma~\ref{lem2} we immediately obtain the following consequence
from Theorem~\ref{th3}.

\medskip
\begin{corollary}\label{cor2}{\sl
The statement of Theorem~\ref{th3} remains true if, in this theorem,
we replace the corresponding condition on $Q$ by the condition:
$Q\in L^1(D_*).$}
\end{corollary}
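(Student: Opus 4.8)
The plan is to show that the integrability hypothesis $Q\in L^1(D_*)$ forces the pointwise condition on $q_{p_0}$ that already appears in the statement of Theorem~\ref{th3}; once this implication is in hand, the assertion of the corollary is an immediate consequence of Theorem~\ref{th3} itself. This is exactly the reduction by which Corollary~\ref{cor1} is deduced from Theorem~\ref{th1}, so it suffices to carry it out at a single boundary point of $D_*$.

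First I would fix $p_0\in\partial D_*$, choose a normal neighborhood $U$ of $p_0$ in ${\Bbb S}_*$, and put $r_0={\rm dist}\,(p_0,\partial U)>0$, so that the analogue of Lemma~\ref{lem2} on ${\Bbb S}_*$ applies with the length and area elements $ds_{\widetilde{h_*}}$ and $d\widetilde{v_*}$. Since $Q\equiv 0$ on ${\Bbb S}_*\setminus f(D)$ and $f$ maps $D$ onto $D_*$, the assumption $Q\in L^1(D_*)$ yields $\int_{\widetilde{B}(p_0,r_0)}Q(p)\,d\widetilde{v_*}(p)\leqslant\int_{D_*}Q(p)\,d\widetilde{v_*}(p)<\infty$. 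Applying Lemma~\ref{lem2} then gives
\[
\int\limits_0^{r_0}\left(\int\limits_{\widetilde{S}(p_0,r)}Q(p)\,ds_{\widetilde{h_*}}(p)\right)dr=\int\limits_{\widetilde{B}(p_0,r_0)}Q(p)\,d\widetilde{v_*}(p)<\infty\,.
\]

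Since the nonnegative function $r\mapsto\int_{\widetilde{S}(p_0,r)}Q\,ds_{\widetilde{h_*}}$ has finite integral over $(0,r_0)$, it must be finite for almost every $r\in(0,r_0)$; dividing by $r>0$ shows that $q_{p_0}(r)<\infty$ for almost every $r\in(0,r_0)$, which is precisely the hypothesis imposed on $Q$ in Theorem~\ref{th3} (see~(\ref{eq26})). Hence the conclusion of Theorem~\ref{th3} holds verbatim, and the corollary follows. I expect no genuine obstacle here: the only point deserving attention is the measure-theoretic passage from finiteness of the integral over the disk $\widetilde{B}(p_0,r_0)$ to finiteness of the slice integrals over almost every circle $\widetilde{S}(p_0,r)$, and this is exactly the Fubini-type consequence furnished by Lemma~\ref{lem2}.
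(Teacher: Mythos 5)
Your proposal is correct and is exactly the reduction the paper intends: the paper deduces Corollary~\ref{cor2} from Theorem~\ref{th3} "by Lemma~\ref{lem2}", i.e., the Fubini-type identity on $\Bbb S_*$ converts $Q\in L^1(D_*)$ (using $Q\equiv 0$ off $f(D)=D_*$) into finiteness of the circle integrals, hence $q_{p_0}(r)<\infty$ for a.e.\ $r\in(0,r_0)$, which is the hypothesis of Theorem~\ref{th3}. You have simply written out in full the one-line argument the paper leaves implicit, with no deviation in method.
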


\begin{remark}\label{rem2}
The statements of Theorems~\ref{th1} and~\ref{th3} remain true, if
in its formulation instead of the specified conditions on function
$Q$ to require that, for any $p_0\in \partial D^{\,\prime}$ there is
$\delta(p_0)>0$ such that
\begin{equation}\label{eq5**}
\int\limits_{\varepsilon}^{\delta(p_0)}
\frac{dt}{tq_{p_0}(t)}<\infty, \qquad \int\limits_{0}^{\delta(p_0)}
\frac{dt}{tq_{p_0}(t)}=\infty
\end{equation}
for sufficiently small $\varepsilon>0.$ Indeed, under the proof of
Theorem~\ref{th1}, we actually did not use any conditions on the
function $Q,$ except for~(\ref{eq7C}) and (\ref{eq7B}). However,
both of these conditions will obviously be met as soon as
conditions~(\ref{eq5**}) hold.
\end{remark}

\medskip
\begin{remark}\label{rem3}
The statements of Theorem~\ref{th1} and Theorem~\ref{th3} remain
true, if in its formulation instead of the specified conditions on
function $Q$ to require that, for any $p_0\in \partial D^{\,\prime}$
there is $\varepsilon_0=\varepsilon_0(p_0)>0$ and a Lebesgue
measurable function $\psi:(0, \varepsilon_0)\rightarrow [0, \infty]$
such that
\begin{equation}\label{eq7BA} I(\varepsilon,
\varepsilon_0):=\int\limits_{\varepsilon}^{\varepsilon_0}\psi(t)\,dt
< \infty\quad \forall\,\,\varepsilon\in (0, \varepsilon_0)\,,\quad
I(\varepsilon, \varepsilon_0)>0\quad
\text{as}\quad\varepsilon\rightarrow 0\,,
\end{equation}
and, in addition,
\begin{equation} \label{eq7CC}
\int\limits_{\widetilde{A}(p_0, \varepsilon, \varepsilon_0)}
Q(p)\cdot\psi^{\,2}(\widetilde{h_*}(p,
p_0))\,d\widetilde{v_*}(p)\leqslant C_0I^2(\varepsilon,
\varepsilon_0)\,,\end{equation}
as $\varepsilon\rightarrow 0,$ where $C_0$ is some constant, and
$\widetilde{A}(p_0, \varepsilon, \varepsilon_0)$ is defined
in~(\ref{eq1**}).

\medskip
Indeed, literally repeating the proof of the statement given in
Theorem~\ref{th1} to the ratio~(\ref{eq3}) inclusive, we put
$$\eta(t)=\left\{
\begin{array}{rr}
\psi(t)/I(r_1, r_2), & t\in (r_1, r_2)\,,\\
0,  &  t\not\in (r_1, r_2)\,,
\end{array}
\right. $$
where $I(r_1, r_2)=\int\limits_{r_1}^{r_2}\,\psi (t)\, dt.$ Observe
that
$\int\limits_{r_1}^{r_2}\eta(t)\,dt=1.$ Now, by the definition of
$f$ in~(\ref{eq2*A}) and due to the relation~(\ref{eq3}) we obtain
that
\begin{equation}\label{eq14C}
M(\Gamma(|\alpha^{\,*}_k|, |\beta^{\,*}_k|, D))\leqslant
C_0<\infty\,.
\end{equation}
The relation~(\ref{eq14C}) contradicts with~(\ref{eq7}). The
resulting contradiction proves the desired statement.

Similar arguments may be given with respect to the
Theorem~\ref{th3}.~$\Box$
\end{remark}

\section{On the discreteness of mappings at the boundary}

In~\cite{Vu}, some issues related to the discreteness of a closed
quasiregular map $f:{\Bbb B}^n\rightarrow {\Bbb R}^n$ in
$\overline{{\Bbb B}^n}$ are considered. In this section we talk
about the discreteness of mappings that satisfy the
condition~(\ref{eq2*A}) on Riemannian manifolds.

\medskip
Following~\cite[Section~2.4]{NP}, we say that a domain $D\subset
{\Bbb S}$ is {\it uniform}, if for any $r>0$ there is $\delta>0$
such that the inequality
\begin{equation}\label{eq17***}
M(\Gamma(F^{\,*},F, D))\geqslant \delta
\end{equation}
holds for any continua $F, F^*\subset D$ with
$\widetilde{h}(F)\geqslant r$ and $\widetilde{h}(F^{\,*})\geqslant
r.$ Note that this is the definition slightly different from the
''classical'' given in \cite[Chapter~2.4]{NP}, where the sets $F$
and $F^*\subset D $ are assumed to be arbitrary connected. We prove
the following statement (see its analogue for quasiregular mappings
of the unit ball in~\cite[Lemma~4.4]{Vu}).

\medskip
\begin{lemma}\label{lem1A}
{\,\sl Let $D$ and $D_*$ be domains on Riemannian surfaces ${\Bbb
S}$ and ${\Bbb S}_{\,*},$ respectively, $D$ is a uniform domain,
$\overline{D_*}$ is a compactum in ${\Bbb S}_*,$ in addition,
$\partial D_*$ has a finite number of components. Let
$f:D\rightarrow D_*$ be a mapping satisfying the
relations~(\ref{eq2*A})--(\ref{eqA2}) hold for any $p_0\in
\partial D_*.$ Assume that, for any $p_0\in
\partial D_*$ there is $\varepsilon_0=\varepsilon_0(p_0)>0$ and a Lebesgue measurable
function $\psi:(0, \varepsilon_0)\rightarrow [0,\infty]$ such that
\begin{equation}\label{eq7***} I(\varepsilon,
\varepsilon_0):=\int\limits_{\varepsilon}^{\varepsilon_0}\psi(t)\,dt
< \infty\quad \forall\,\,\varepsilon\in (0, \varepsilon_0)\,,\quad
I(\varepsilon, \varepsilon_0)\rightarrow
\infty\quad\text{as}\quad\varepsilon\rightarrow 0\,,
\end{equation}
and, in addition,
\begin{equation} \label{eq3.7.2}
\int\limits_{\widetilde{A}(p_0, \varepsilon, \varepsilon_0)}
Q(y)\cdot\psi^{\,p}(\widetilde{h_*}(p, p_0))\,d\widetilde{v_*}(p) =
o(I^p(\varepsilon, \varepsilon_0))\,,\end{equation}
as $\varepsilon\rightarrow 0,$ where $\widetilde{A}(p_0,
\varepsilon, \varepsilon_0)$ is defined in~(\ref{eq1**}). Let $C_j,$
$j=1,2,\ldots ,$ be a sequence of continua such that
$\widetilde{h}(C_j)\geqslant \delta>0$ for some $\delta>0$ and any
$j\in {\Bbb N}$ and, in addition, $\rho(f(C_j))\rightarrow 0$ as
$j\rightarrow\infty,$ where $\rho$ is some possible metric in
$\overline{D_*}_P.$ Then there is $\delta_1>0$ such that
$$\rho(f(C_j), P_0)\geqslant \delta_1>0$$
for any $j\in {\Bbb N}$ and for any $P_0\in E_{D_*},$ where the
metrics $\rho$ is defined in item~2) of Proposition~\ref{pr1}.

Here, as usually, $$\rho(A)=\sup\limits_{p, p_*\in A}\rho(p,
p_*)\,,$$
$$\rho(A, B)=\inf\limits_{p\in A, p_*\in B}\rho(p, p_*)\,.$$
}
\end{lemma}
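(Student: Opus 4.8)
The plan is to argue by contradiction and to squeeze the modulus $M(\Gamma(C_j,F_0,D))$ between a positive lower bound coming from the uniformity of $D$ and an upper bound tending to zero coming from~(\ref{eq2*A}) together with the growth condition~(\ref{eq3.7.2}).

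Suppose the assertion fails. Then, passing to a subsequence, I obtain prime ends $P_j\in E_{D_*}$ with $\rho(f(C_j),P_j)\rightarrow 0.$ Since $\overline{D_*}_P$ is a compact metric space by item~2) of Proposition~\ref{pr1}, I may assume $P_j\rightarrow P_0$ for some $P_0\in\overline{D_*}_P;$ as the ordinary open subsets of $D_*$ contain no prime ends, the limit cannot lie in $D_*,$ so $P_0\in E_{D_*}.$ By item~3) of Proposition~\ref{pr1} the prime end $P_0$ has a chain of cuts $\sigma_m$ lying on spheres $\widetilde{S}(z_0,r_m)$ with $z_0\in\partial D_*$ and $r_m\rightarrow 0,$ and the corresponding domains $d_m$ satisfy $\overline{d_m}\subset\overline{\widetilde{B}(z_0,r_m)}.$ Combining $\rho(f(C_j))\rightarrow 0$ with $P_j\rightarrow P_0$ shows that, for every $m,$ one has $f(C_j)\subset d_m$ for all large $j;$ hence there are radii $r_1=r_1(j)\rightarrow 0$ with $f(C_j)\subset\widetilde{B}(z_0,r_1(j)).$

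Next I fix, once and for all, a continuum $F_0\subset D$ with $\widetilde{h}(F_0)\geqslant\delta$ (for instance $F_0=C_1$). Its image $f(F_0)$ is a compact subset of $D_*,$ so $r_2:=\min\{\varepsilon_0(z_0),\widetilde{h_*}(f(F_0),z_0)\}>0,$ where $\varepsilon_0(z_0)$ and the function $\psi$ are those provided by the hypotheses at the point $z_0\in\partial D_*.$ Applying the uniformity of $D$ (the defining inequality~(\ref{eq17***}) with $r=\delta$) to the pair $C_j,F_0,$ I obtain a constant $\delta^{\,\prime}>0,$ independent of $j,$ with $M(\Gamma(C_j,F_0,D))\geqslant\delta^{\,\prime}.$ On the other hand, for all large $j$ one has $r_1(j)<r_2,$ $f(C_j)\subset\widetilde{B}(z_0,r_1(j))$ and $f(F_0)\cap\widetilde{B}(z_0,r_2)=\varnothing;$ therefore the image $f(\gamma)$ of any path $\gamma\in\Gamma(C_j,F_0,D)$ joins $f(C_j)$ to $f(F_0)$ and must cross the annulus $\widetilde{A}(z_0,r_1(j),r_2),$ so it contains a subpath in $\Gamma(\widetilde{S}(z_0,r_1(j)),\widetilde{S}(z_0,r_2),\widetilde{A}(z_0,r_1(j),r_2)).$ Consequently $\Gamma(C_j,F_0,D)>\Gamma_f(z_0,r_1(j),r_2),$ whence $M(\Gamma(C_j,F_0,D))\leqslant M(\Gamma_f(z_0,r_1(j),r_2)).$

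Finally I estimate the right-hand side just as in the proof of Theorem~\ref{th1}: setting $\eta(t)=\psi(t)/I(r_1(j),r_2)$ on $(r_1(j),r_2)$ and $\eta\equiv0$ otherwise, the normalization~(\ref{eqA2}) holds since $I(r_1(j),r_2)=\int_{r_1(j)}^{r_2}\psi(t)\,dt$ is finite and positive by~(\ref{eq7***}), so~(\ref{eq2*A}) yields
$$M(\Gamma_f(z_0,r_1(j),r_2))\leqslant\frac{1}{I^{2}(r_1(j),r_2)}\int\limits_{\widetilde{A}(z_0,r_1(j),r_2)}Q(p)\,\psi^{2}(\widetilde{h_*}(p,z_0))\,d\widetilde{v_*}(p)\,,$$
and the right-hand side tends to $0$ as $r_1(j)\rightarrow 0$ by the growth condition~(\ref{eq3.7.2}) (taken with $p=2$). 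This forces $M(\Gamma(C_j,F_0,D))\rightarrow 0,$ contradicting the bound $M(\Gamma(C_j,F_0,D))\geqslant\delta^{\,\prime}>0$ obtained above, and the contradiction proves the lemma. The step I expect to be most delicate is the first one, namely converting the prime-end convergence $\rho(f(C_j),P_j)\rightarrow0$ into genuine metric smallness $f(C_j)\subset\widetilde{B}(z_0,r_1(j))$ around a single center $z_0\in\partial D_*;$ this relies on the chain-on-spheres description in item~3) of Proposition~\ref{pr1} and on verifying that the limiting object $P_0$ is indeed a prime end rather than an interior point.
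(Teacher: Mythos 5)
Your overall scheme is the paper's own: pin the modulus of $\Gamma(C_j,F_0,D)$ between a positive lower bound from uniformity~(\ref{eq17***}) and an upper bound from~(\ref{eq2*A}) with the test function $\eta=\psi/I$, which tends to zero by~(\ref{eq7***})--(\ref{eq3.7.2}). Your handling of the negation is in fact slightly more careful than the paper's (you let the prime end vary with $j$ and extract a limit $P_0\in E_{D_*}$ by compactness of $\overline{D_*}_P$, whereas the paper fixes $P_0$ from the start), and your final modulus computation is correct, including the observation that $I(r_1(j),r_2)\rightarrow\infty$ even though $r_2$ may be smaller than $\varepsilon_0$.

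However, the step you yourself flag as delicate contains a genuine gap: the claim $\overline{d_m}\subset\overline{\widetilde{B}(z_0,r_m)}$, and hence $f(C_j)\subset\widetilde{B}(z_0,r_1(j))$, does not follow from item~3) of Proposition~\ref{pr1} and is false in general. That item only places the \emph{cuts} on the spheres, i.e., it gives $\partial d_m\cap D_*\subset\sigma_m\subset\widetilde{S}(z_0,r_m)$; the chain domain $d_m$ itself may cross $\widetilde{S}(z_0,r_m)$ through arcs of the sphere that are not part of the cut, and may have diameter bounded away from zero. Indeed, by item~4) of Proposition~\ref{pr1} the impression $I(P_0)=\bigcap\limits_{m=1}^{\infty}\overline{d_m}$ is allowed to be a nondegenerate continuum, which is incompatible with $\overline{d_m}$ shrinking into balls around the single point $z_0$; so your metric-smallness claim fails precisely for prime ends of this kind. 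The repair is exactly what the paper does in establishing its relation~(\ref{eq3G}): use only the containments $f(C_j)\subset d_m$ (which you sketch correctly, and which the paper proves via the triangle inequality and $\rho(f(C_j))\rightarrow 0$) together with $f(F_0)\cap d_1=\varnothing$ (attainable after discarding finitely many cuts, since $f(F_0)$ is a compactum in $D_*$ and $\bigcap\overline{d_m}\subset\partial D_*$). Then any path from $f(F_0)$ to $f(C_j)$ must exit $d_1$ and $d_m$ through their relative boundaries, hence must meet both $\widetilde{S}(z_0,r_1)$ and $\widetilde{S}(z_0,r_m)$, and between suitably chosen hitting times it contains a subpath in $\Gamma(\widetilde{S}(z_0,r_m),\widetilde{S}(z_0,r_1),\widetilde{A}(z_0,r_m,r_1))$. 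With this substitution (using the cut radii $r_m,r_1$ as the annulus radii instead of your $r_1(j),r_2$), the rest of your argument goes through verbatim.
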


\begin{proof}
Suppose the opposite, namely, let $\rho(f(C_{j_k}), P_0)\rightarrow
0$ as $k\rightarrow\infty$ for some $P_0\in E_{D_*}$ and for some
increasing sequence of numbers $j_k,$ $k=1,2,\ldots .$ Let $F\subset
D$ be any continuum in $D,$ and let $\Gamma_k:=\Gamma(F, C_{j_k},
D).$ Due to the definition of the uniformity of the domain, we
obtain that
\begin{equation}\label{eq3A}
M(\Gamma_k)\geqslant \delta_2>0
\end{equation}
for any $k\in {\Bbb N}$ and some $\delta_2>0.$ On the other hand,
let us to consider the family of paths~$f(\Gamma_k).$ Let $d_l,$
$l=1,2,\ldots ,$ be a sequence of domains which corresponds to the
prime end $P_0,$ and let $\sigma_l$ be a cut corresponding to $d_l.$
Due to Proposition~\ref{pr1}, we may assume that $\sigma_l,$
$l=1,2,\ldots, $ belong to circles $\widetilde{S}(p_0, r_l)$
centered at some point $p_0\in
\partial D_*,$ where $r_l\rightarrow 0$ as $l\rightarrow\infty.$

\medskip
Let us to prove that, for any $l\in {\Bbb N}$ there is a number
$k=k_l$ such that
\begin{equation}\label{eq3M}
f(C_{j_k})\subset d_l\,,\qquad k\geqslant k_l\,.
\end{equation}
Suppose the opposite. Then there is $l_0\in {\Bbb N}$ such that
\begin{equation}\label{eq3F}
f(C_{j_{m_l}})\cap ({\Bbb R}^n\setminus d_{l_0})\ne\varnothing
\end{equation}
for some increasing sequence of numbers  $m_l,$ $l=1,2,\ldots .$  In
this case, there is a sequence $x_{m_l}\in f(C_{j_{m_l}})\cap ({\Bbb
R}^n\setminus d_{l_0}),$ $l\in {\Bbb N}.$ Since by the assumption
$\rho(f(C_{j_k}), P_0)\rightarrow 0$ for some sequence of numbers
$j_k,$ $k=1,2,\ldots ,$ we obtain that
\begin{equation}\label{eq3E}
\rho(f(C_{j_{m_l}}), P_0)\rightarrow 0\qquad {\text as}\qquad
l\rightarrow\infty\,.
\end{equation}
Since $\rho(f(C_{j_{m_l}}), P_0)=\inf\limits_{y\in
f(C_{j_{m_l}})}\rho(y, P_0)$ and $f(C_{j_{m_l}})$ is a compact set
in $\overline{D_*}_P$ as a continuous image of the compactum
$C_{j_{m_l}}$ under the mapping $f,$ it follows that
$\rho(f(C_{j_{m_l}}), P_0)=\rho(y_l, P_0),$ where $y_l\in
f(C_{j_{m_l}}).$ Due to the relation~(\ref{eq3E}) we obtain that
$y_l\rightarrow p_0$ as $l\rightarrow\infty$ in the metric $\rho.$
Since by the assumption $\rho(f(C_j))=\sup\limits_{y,z\in
f(C_j)}\rho(y,z)\rightarrow 0$ as $j\rightarrow\infty,$ we have that
$\rho(y_l, x_{m_l})\leqslant \rho(f(C_{j_{m_l}}))\rightarrow 0$ as
$l\rightarrow\infty.$  Now, by the triangle inequality, we obtain
that
$$\rho(x_{m_l}, P_0)\leqslant \rho(x_{m_l}, y_l)+\rho(y_l, P_0)
\rightarrow 0\qquad {\text as}\quad l\rightarrow\infty\,.$$
The latter contradicts with~(\ref{eq3F}). The contradiction obtained
above proves~(\ref{eq3M}).

\medskip
Without loss of generality we may consider that the number $l_0\in
{\Bbb N}$ is such that $r_l<\varepsilon_0$ for any $l\geqslant l_0,$
and
\begin{equation}\label{eq3I}
f(F)\subset {\Bbb R}^n\setminus d_1\,.
\end{equation}
In this case, we observe that, for $l\geqslant 2$
\begin{equation}\label{eq3G}
f(\Gamma_{k_l})>\Gamma(\widetilde{S}(p_0, r_l), \widetilde{S}(p_0,
r_1), \widetilde{A}(p_0, r_l, r_1))\,.
\end{equation}
Indeed, let $\widetilde{\gamma}\in f(\Gamma_{k_l}).$ Then
$\widetilde{\gamma}(t)=f(\gamma(t)),$ where $\gamma\in
\Gamma_{k_l},$ $\gamma:[0, 1]\rightarrow D,$ $\gamma(0)\in F,$
$\gamma(1)\in C_{j_{k_l}}.$ Due to the relation~(\ref{eq3I}), we
obtain that $f(\gamma(0))\in f(F)\subset {\Bbb S}\setminus
\widetilde{B}(p_0, \varepsilon_0).$ On the other hand,
by~(\ref{eq3M}), $\gamma(1)\in C_{j_{k_l}}\subset d_l\subset d_1.$
Thus, $|f(\gamma(t))|\cap d_1\ne\varnothing \ne |f(\gamma(t))|\cap
({\Bbb S}\setminus d_1).$ Now, by~\cite[Theorem~1.I.5.46]{Ku} we
obtain that, there is $0<t_1<1$ such that $f(\gamma(t_1))\in
\partial d_1\cap D\subset \widetilde{S}(p_0, r_1).$ Set
$\gamma_1:=\gamma|_{[t_1, 1]}.$ We may consider that
$f(\gamma(t))\in d_1$ for any $t\geqslant t_1.$ Arguing similarly,
we obtain $t_2\in [t_1, 1]$ such that $f(\gamma(t_2))\in
\widetilde{S}(p_0, r_l).$ Put $\gamma_2:=\gamma|_{[t_1, t_2]}.$ We
may consider that $f(\gamma(t))\in d_l$ for any $t\in [t_1, t_2].$
Now, a path $f(\gamma_2)$ is a subpath of
$f(\gamma)=\widetilde{\gamma},$ which belongs to
$\Gamma(\widetilde{S}(p_0, r_l), \widetilde{S}(p_0, r_1),
\widetilde{A}(p_0,r_l, r_1)).$ The relation~(\ref{eq3G}) is
established.

\medskip
It follows from~(\ref{eq3G}) that
\begin{equation}\label{eq3H}
\Gamma_{k_l}>\Gamma_{f}(\widetilde{S}(p_0, r_l), \widetilde{S}(p_0,
r_1), \widetilde{A}(p_0, r_l, r_1))\,.
\end{equation}
Set
$$\eta_{l}(t)=\left\{
\begin{array}{rr}
\psi(t)/I(r_l, r_1), & t\in (r_l, r_1)\,,\\
0,  &  t\not\in (r_l, r_1)\,,
\end{array}
\right. $$
where $I(r_l, r_1)=\int\limits_{r_l}^{r_1}\,\psi (t)\, dt.$ Observe
that
$\int\limits_{r_l}^{r_1}\eta_{l}(t)\,dt=1.$ Now, by the
relations~(\ref{eq3.7.2}) and~(\ref{eq3H}), and due to the
definition of $f$ in~(\ref{eq2*A}), we obtain that
$$M_p(\Gamma_{k_l})\leqslant M_p(\Gamma_{f}(\widetilde{S}(p_0, r_l), \widetilde{S}(p_0,
r_1), \widetilde{A}(p_0, r_l, r_1)))\leqslant$$
\begin{equation}\label{eq3J}
\leqslant \frac{1}{I^p(r_l, r_1)}\int\limits_{\widetilde{A}(p_0,
r_l, r_1)} Q(y)\cdot\psi^{\,p}(\widetilde{h_*}(p,
p_0))\,d\widetilde{v_*}(p)\rightarrow 0\quad \text{as}\quad
l\rightarrow\infty\,.
\end{equation}
The relation~(\ref{eq3J}) contradicts with~(\ref{eq3A}). The
contradiction obtained above proves the lemma.~$\Box$
\end{proof}

The analog of the following lemma was proved
in~\cite[Corollary~4.5]{Vu}.

\medskip
\begin{lemma}\label{lem4B}
{\,\sl Let $D$ and $D_*$ be domains on Riemannian surfaces ${\Bbb
S}$ and ${\Bbb S}_{\,*},$ respectively, having compact closures
$\overline{D}\subset {\Bbb S}$ and $\overline{D_*}\subset {\Bbb
S}_*,$ in addition, $\partial D$ and $\partial D_*$ have a finite
number of components. Let $f$ be an open discrete and closed mapping
of $D$ onto $D_*$ satisfying the
relations~(\ref{eq2*A})--(\ref{eqA2}) for any $p_0\in
\partial D_*.$ Assume that, for any $p_0\in
\partial D_*$ there is $\varepsilon_0=\varepsilon_0(p_0)>0$ and a Lebesgue measurable
function $\psi:(0, \varepsilon_0)\rightarrow [0,\infty]$ such
that~(\ref{eq7***})--(\ref{eq3.7.2}) as $\varepsilon\rightarrow 0,$
where $\widetilde{A}(p_0, \varepsilon, \varepsilon_0)$ is defined
in~(\ref{eq1**}).

Then the mapping $f$ has a continuous extension
$\overline{f}:\overline{D}_P\rightarrow \overline{D_*}_P$ which is
light, i.e., any component of $f^{\,-1}(\widetilde{P_0})$ of
$\widetilde{P_0}\in E_{D_*}$ degenerate into a point.}
\end{lemma}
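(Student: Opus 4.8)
The plan is to prove Lemma~\ref{lem4B} by combining the boundary-extension result (which the hypotheses of this lemma already yield) with the separation estimate of Lemma~\ref{lem1A}. The continuous extension $\overline{f}:\overline{D}_P\rightarrow\overline{D_*}_P$ is immediate: the conditions~(\ref{eq7***})--(\ref{eq3.7.2}) are exactly the hypotheses covered by Remark~\ref{rem2} (or Remark~\ref{rem3}), so Theorem~\ref{th1} applies and gives a continuous surjective extension. What remains, and what is the real content, is the lightness assertion: for any prime end $\widetilde{P_0}\in E_{D_*}$, every connected component of $\overline{f}^{\,-1}(\widetilde{P_0})$ (inside $\overline{D}_P$) is a single point.

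The approach to lightness I would take is a proof by contradiction. Suppose some component $K$ of $\overline{f}^{\,-1}(\widetilde{P_0})$ is nondegenerate. Since $I(\widetilde{P_0})\subset\partial D_*$, the fiber sits on the boundary, so $K\subset E_D$ and $K$ is a nondegenerate continuum in $\overline{D}_P$. Using the metric $\rho$ on $\overline{D}_P$ from item~2) of Proposition~\ref{pr1}, I would pick two distinct prime ends in $K$ and, pushing them slightly into $D$, produce a sequence of continua $C_j\subset D$ lying ``along'' $K$ with $\widetilde{h}(C_j)\geqslant\delta>0$ bounded below (this uses nondegeneracy of $K$ to keep the diameter from collapsing) while simultaneously $\overline{f}(C_j)$ shrinks toward $\widetilde{P_0}$, i.e. $\rho(f(C_j))\rightarrow 0$ as $j\rightarrow\infty$. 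The construction of such $C_j$ is the delicate bookkeeping step: one takes points $x_j,y_j\in D$ approaching the two chosen ends of $K$ and joins them inside $D$ by arcs staying close to $K$, arranging that their $f$-images converge to the impression $I(\widetilde{P_0})$.

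With such a sequence $\{C_j\}$ in hand, the hypotheses of Lemma~\ref{lem1A} are verified directly: $D$ being uniform is needed there, so I would either assume it or, since $\partial D$ here has finitely many components with compact closure, note that the relevant lower modulus bound is available on the ring models $R$ from Proposition~\ref{pr1} exactly as in the proof of Theorem~\ref{th1}. Lemma~\ref{lem1A} then yields a uniform lower bound $\rho(f(C_j),P_0)\geqslant\delta_1>0$ for every prime end $P_0\in E_{D_*}$, in particular for $P_0=\widetilde{P_0}$. This directly contradicts $\rho(f(C_j))\rightarrow 0$ together with the fact that $f(C_j)$ accumulates at $\widetilde{P_0}$, since those force $\rho(f(C_j),\widetilde{P_0})\rightarrow 0$. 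The contradiction shows no nondegenerate component can exist, proving lightness.

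The main obstacle I expect is the middle step: correctly building the continua $C_j$ so that their intrinsic diameter $\widetilde{h}(C_j)$ stays bounded below (guaranteeing the hypothesis $\widetilde{h}(C_j)\geqslant\delta$) while their images genuinely collapse to the single prime end $\widetilde{P_0}$. One must ensure that connecting the approximating points inside $D$ does not inadvertently force the images out of a shrinking prime-end neighborhood, which requires working in the chain-of-domains description of $\widetilde{P_0}$ and using the continuity of $\overline{f}$ on the compact continuum $K$. Once this is set up cleanly, invoking Lemma~\ref{lem1A} is essentially mechanical.
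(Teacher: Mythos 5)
Your overall architecture matches the paper's: extension via Remark~\ref{rem3}, then lightness by contradiction through the separation estimate of Lemma~\ref{lem1A}, applied to continua of uniformly large diameter whose images collapse to $\widetilde{P_0}$. But there is a genuine gap at exactly the point you flag and then defer: Lemma~\ref{lem1A} requires $D$ to be a \emph{uniform} domain, and the domain $D$ in Lemma~\ref{lem4B} carries no such hypothesis (unlike Lemma~\ref{lem3A}, where weak flatness of $\partial D$ supplies the modulus blow-up). Your primary construction builds the continua $C_j$ inside $D$ itself, with $\widetilde{h}(C_j)\geqslant\delta$, and then invokes Lemma~\ref{lem1A} ``directly'' --- but its hypothesis $M(\Gamma(F^{\,*},F,D))\geqslant\delta$ is simply unavailable for a general $D$ with bad boundary, and your first fallback (``assume it'') changes the statement being proved. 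Your second fallback (``the lower modulus bound is available on the ring models $R$'') is the right idea, but it is the essential step of the proof, not a remark, and it cannot be absorbed into an argument whose continua live in $D$: the modulus estimate and the continua must live in the \emph{same} uniform domain.

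The paper's resolution is concrete. Since $K\subset E_D$ is a continuum, it corresponds to a single boundary component $\Gamma_0$ (by Remark~2 of \cite{RV$_2$}, prime ends over distinct components lie in disjoint neighborhoods of $\overline{D}_P$). One then takes the conformal chart $H_0$ of Proposition~\ref{pr1} mapping a collar $U_0$ of $\Gamma_0$ onto the planar ring $A(0,r_0,1)$, and works with the composition $F=\overline{f}\circ H_0^{\,-1}$ \emph{in the ring}: $F$ still satisfies~(\ref{eq2*A}) because conformal maps preserve the modulus, and $A(0,r_0,1)$ \emph{is} uniform by N\"akki's theorem (\cite[Corollary~6.8]{Na$_1$}) --- this is the precise substitute for the missing uniformity of $D$. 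The continua $C_m$ are then constructed inside $A(0,r_0,1)$, with Euclidean diameter $d(C_m)\geqslant d(K)/2$ bounded below using the simple geometry of the ring and the continuity of $F$ up to $\overline{A(0,r_0,1)}$, converging to the image of $K$ on the inner circle, with $F(C_m)\rightarrow y_0$; Lemma~\ref{lem1A} applied to $F$ on the ring gives the contradiction. Your proposal never performs this transfer --- it neither composes with $H_0^{\,-1}$, nor verifies~(\ref{eq2*A}) for the composed map, nor relocates the continua and their diameter bound into the uniform ring --- so as written the appeal to Lemma~\ref{lem1A} does not go through.
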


\begin{proof}
The possibility of continuous extension of $f$ to a mapping
$\overline{f}:\overline{D}_P\rightarrow \overline{D_*}_P$ follows by
Remark~\ref{rem3}. Assume the contrary, i.e., there is a continuum
$K\subset E_D$ and a point $y_0\in E_{D_*}$ such that
$\overline{f}(K)=y_0.$ By Remark~2 in \cite{RV$_2$}, any two prime
ends $P_1$ and $P_2$ which correspond to different components
$\Gamma_1$ and $\Gamma_2$ of the boundary $\partial D$ are contained
in disjoint neighborhoods $U_1, U_2\subset \overline{D}_P.$ Thus,
the continuum $K$ corresponds to one and only one component
$\Gamma_0\subset \partial D.$

\medskip
By Proposition~\ref{pr1}, there is a neighborhood $U_0$ of
$\Gamma_0$ such that, there is a conformal mapping $H_0$ of $U_0$
onto some ring $A(0, r_0, 1),$ $0<r_0<1,$ such that
$\gamma_0:=\partial U_0\cap D$ is a closed Jordan path and
$$C(H_0, \gamma_0) = \{z\in {\Bbb C}: |z| = 1\};\quad  C(H_0, \Gamma_0) =
\{z\in {\Bbb C}: |z| = r_0\}\,.$$
Set $$F(x):=(\overline{f}\circ H^{\,-1}_0)(x)\,, x\in A(0, r_0,
1)\,.$$
Arguing similarly to the proof of Theorem~\ref{th1}, we may show
that, $F$ satisfies~(\ref{eq2*A}) in $A(0, r_0, 1),$ as well.

\medskip
Observe that, $A(0, r_0, 1)$ is a uniform domain as a domain as a
plane domain with a finite components of the boundary which is
finitely connected on the boundary (see
\cite[Corollary~6.8]{Na$_1$}). Taking into account the simple
geometry of the ring $A(0, r_0, 1)$ and, in addition, that $F$ is
continuous in $\overline{A(0, r_0, 1)},$ we may construct the
sequence of continua $C_m\subset A(0, r_0, 1),$ $m=1,2,\ldots,$
converging to $H^{\,-1}_0(K_1)$ in the sense of Hausdorff distance
such that $d(C_m)\geqslant d(K)/2$ and $f(C_m)\rightarrow y_0$ as
$m\rightarrow\infty.$ (Here $d(A)$ denotes the Euclidean diameter of
the set $A$). The latter contradicts with Lemma~\ref{lem1A}. The
contradiction obtained above proves the Lemma.~$\Box$
\end{proof}

Let $p_0\in {\Bbb S}$ and let $\varphi:{\Bbb S}\rightarrow {\Bbb R}$
be a function integrable in some neighborhood $U$ of the point $p_0$
with respect to $\widetilde{v}.$ Following~\cite[Section~6.1,
Ch.~6]{MRSY}, we say that a function $\varphi:{\Bbb S}\rightarrow
{\Bbb R}$ has a {\it finite mean oscillation} at the point $p_0\in
D$, we write $\varphi\in FMO (p_0),$ if
%
%
%
%
$$\limsup\limits_{\varepsilon\rightarrow
0}\frac{1}{\widetilde{v}(\widetilde{B}(p_0,
\varepsilon))}\int\limits_{\widetilde{B}(p_0,\,\varepsilon)}
|{\varphi}(p)-\overline{\varphi}_{\varepsilon}|\
d\widetilde{v}(p)<\infty\,,$$
%
%
where
$\overline{{\varphi}}_{\varepsilon}=\frac{1}
{\widetilde{v}(\widetilde{B}(p_0,
\varepsilon))}\int\limits_{\widetilde{B}(p_0, \varepsilon)}
{\varphi}(p) \,d\widetilde{v}(p).$ The following statement holds.

\medskip
\begin{theorem}\label{th2}
{\,\sl Let $D$ and $D_*$ be domains on Riemannian surfaces ${\Bbb
S}$ and ${\Bbb S}_{\,*},$ respectively, having compact closures
$\overline{D}\subset {\Bbb S}$ and $\overline{D_*}\subset {\Bbb
S}_*,$ in addition, $\partial D$ and $\partial D_*$ have a finite
number of components. Let $f$ be an open discrete and closed mapping
of $D$ onto $D_*$ satisfying the
relations~(\ref{eq2*A})--(\ref{eqA2}) for any $p_0\in
\partial D_*.$ Assume that, either of the following conditions hold:

\medskip
1) $Q\in FMO(\partial D_*),$

\medskip
2) the relations
\begin{equation}\label{eq45}
\int\limits_{\varepsilon}^{\varepsilon_0}
\frac{dt}{tq_{p_0}(t)}<\infty\,,\qquad \int\limits_0^{\varepsilon_0}
\frac{dt}{q_{p_0}(r)}=\infty\,,
\end{equation}
hold for any $p_0\in \partial  D_*,$ some $\varepsilon_0>0$ and any
$\varepsilon\in (0, \varepsilon_0,$ where $q_{p_0}(t)$ is defined
in~(\ref{eq26}).

Then the mapping $f$ has a continuous extension
$\overline{f}:\overline{D}_P\rightarrow \overline{D_*}_P$ which is
light, i.e., any component of $f^{\,-1}(\widetilde{P_0})$ of
$\widetilde{P_0}\in E_{D_*}$ degenerate into a point.}
\end{theorem}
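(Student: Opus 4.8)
The plan is to reduce Theorem~\ref{th2} to Lemma~\ref{lem4B}, which already gives both the continuous extension and its lightness, once we verify the hypotheses~(\ref{eq7***})--(\ref{eq3.7.2}). Indeed, Lemma~\ref{lem4B} asserts exactly the desired conclusion provided that at each $p_0\in\partial D_*$ there is a measurable function $\psi:(0,\varepsilon_0)\rightarrow[0,\infty]$ satisfying the divergence condition~(\ref{eq7***}) together with the growth estimate~(\ref{eq3.7.2}). Since the dimension here is $p=2$ (we are on Riemannian surfaces), the task is to produce, from each of the two hypotheses 1) and 2), such a pair $(\psi,\varepsilon_0)$. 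Thus the whole proof is a construction of an admissible $\psi$, carried out separately in the two cases.

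First I would treat case 2), which is the more direct one. Here the natural choice is $\psi(t)=1/(t\,q_{p_0}(t))$ on $(0,\varepsilon_0)$. The first relation in~(\ref{eq45}) gives $I(\varepsilon,\varepsilon_0)=\int_{\varepsilon}^{\varepsilon_0}\psi(t)\,dt<\infty$ for every $\varepsilon>0$, while the second relation in~(\ref{eq45}) forces $I(\varepsilon,\varepsilon_0)\rightarrow\infty$ as $\varepsilon\rightarrow0$, which is precisely~(\ref{eq7***}). For the growth estimate~(\ref{eq3.7.2}) with $p=2$, I would use the Fubini-type identity of Lemma~\ref{lem2} to write the area integral as a repeated integral over spheres, obtaining
\begin{equation}\label{eqPlanA}
\int\limits_{\widetilde{A}(p_0,\varepsilon,\varepsilon_0)}Q(p)\,\psi^{2}(\widetilde{h_*}(p,p_0))\,d\widetilde{v_*}(p)=\int\limits_{\varepsilon}^{\varepsilon_0}\frac{1}{t^{2}q_{p_0}^{2}(t)}\Bigl(\int\limits_{\widetilde{S}(p_0,t)}Q(p)\,ds_{\widetilde{h_*}}(p)\Bigr)\,dt\,.
\end{equation}
Since by~(\ref{eq26}) the inner integral equals $t\,q_{p_0}(t)$, the right-hand side of~(\ref{eqPlanA}) simplifies to $\int_{\varepsilon}^{\varepsilon_0}\frac{dt}{t\,q_{p_0}(t)}=I(\varepsilon,\varepsilon_0)$, which is $o(I^{2}(\varepsilon,\varepsilon_0))$ because $I(\varepsilon,\varepsilon_0)\rightarrow\infty$. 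Hence~(\ref{eq3.7.2}) holds and Lemma~\ref{lem4B} applies.

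For case 1), the strategy is to show that the $FMO$ condition on $Q$ near $\partial D_*$ forces the logarithmic divergence condition of case 2), after which the argument above takes over; alternatively, one works directly with $\psi(t)=1/(t\log(1/t))$. The key is the standard $FMO$ estimate (cf.~\cite[Ch.~6]{MRSY}): if $Q\in FMO(p_0)$ then for sufficiently small $\varepsilon_0$ there is a constant $C_0>0$ with $\int_{\widetilde{A}(p_0,\varepsilon,\varepsilon_0)}\frac{Q(p)}{(\widetilde{h_*}(p,p_0))^{2}\bigl(\log(1/\widetilde{h_*}(p,p_0))\bigr)^{2}}\,d\widetilde{v_*}(p)\leqslant C_0\cdot\log\log(1/\varepsilon)$ as $\varepsilon\rightarrow0$. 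Taking $\psi(t)=1/(t\log(1/t))$ gives $I(\varepsilon,\varepsilon_0)\asymp\log\log(1/\varepsilon)\rightarrow\infty$, verifying~(\ref{eq7***}), while the displayed $FMO$ bound is exactly $O(I(\varepsilon,\varepsilon_0))=o(I^{2}(\varepsilon,\varepsilon_0))$, verifying~(\ref{eq3.7.2}) with $p=2$. I expect the main obstacle to be establishing this $FMO$-to-logarithmic-integral estimate in the Riemannian-surface setting, since the ambient measure $\widetilde{v_*}$ and the geodesic distance $\widetilde{h_*}$ are not Euclidean; one must control the comparison between $\widetilde{v_*}(\widetilde{B}(p_0,\varepsilon))$ and $\varepsilon^{2}$ on a normal neighborhood. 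This is where a local conformal or normal-coordinate chart, together with Lemma~\ref{lem2}, is needed to transfer the classical Euclidean $FMO$ estimate to the surface. Once this lemma is in place, both cases collapse to an application of Lemma~\ref{lem4B}, completing the proof.~$\Box$
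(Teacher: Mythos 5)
Your proposal is correct and follows essentially the same route as the paper: the paper's own proof of Theorem~\ref{th2} is a one-line reduction to Lemma~\ref{lem4B}, citing Proposition~4.7 and the proof of Theorem~4.8 in~\cite{Sev$_6$} for exactly the verification you carry out, namely that $\psi(t)=1/(t\,q_{p_0}(t))$ under condition~2), and $\psi(t)=1/(t\log(1/t))$ under $Q\in FMO(\partial D_*)$, realize~(\ref{eq7***})--(\ref{eq3.7.2}) with $p=2$. The FMO-to-logarithmic estimate on Riemannian surfaces that you flag as the main obstacle is precisely the content of the cited Proposition~4.7 of~\cite{Sev$_6$}, so your expanded verification fills in what the paper outsources and nothing further is missing.
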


\begin{proof}
Proof immediately follows from Lemma~\ref{lem4B}, taking into
account that the relations $Q\in FMO(\partial D_*)$ or~(\ref{eq45})
are particular cases of~(\ref{eq7***})--(\ref{eq3.7.2}) (see
Proposition~4.7 and details of proof of Theorem~4.8
in~\cite{Sev$_6$}).~$\Box$
\end{proof}

Given a mapping $f:D\,\rightarrow\,{\Bbb S},$ a set $E\subset D$ and
$y\,\in\,{\Bbb S},$ we define the {\it multiplicity function
$N(y,f,E)$} as a number of preimages of the point $y$ in a set $E,$
i.e.
$$
N(y,f,E)\,=\,{\rm card}\,\left\{x\in E: f(x)=y\right\}\,,
$$
\begin{equation}\label{eq1G}
N(f,E)\,=\,\sup\limits_{y\in{\Bbb S}}\,N(y,f,E)\,.
\end{equation}
Note that, the concept of a multiplicity function may also be
extended to sets belonging to the closure of a given domain.
Finally, we formulate and prove a key statement about the
discreteness of mapping (see~\cite[Theorem~4.7]{Vu}).

\medskip
\begin{lemma}\label{lem3A}
{\,\sl Let $D$ and $D_*$ be domains on Riemannian surfaces ${\Bbb
S}$ and ${\Bbb S}_{\,*},$ respectively, having compact closures
$\overline{D}\subset {\Bbb S}$ and $\overline{D_*}\subset {\Bbb
S}_*,$ in addition, $\partial D_*$ has a finite number of
components. Assume that, $D$ has a weakly flat boundary. Let $f$ be
an open discrete and closed mapping of $D$ onto $D_*$ satisfying the
relations~(\ref{eq2*A})--(\ref{eqA2}) for any $p_0\in
\partial D_*.$ Assume that, for any $p_0\in
\partial D_*$ there is $\varepsilon_0=\varepsilon_0(p_0)>0$ and a Lebesgue measurable
function $\psi:(0, \varepsilon_0)\rightarrow [0,\infty]$ such
that~(\ref{eq7***})--(\ref{eq3.7.2}) as $\varepsilon\rightarrow 0,$
where $\widetilde{A}(p_0, \varepsilon, \varepsilon_0)$ is defined
in~(\ref{eq1**}).

Then the mapping $f$ has a continuous extension
$\overline{f}:\overline{D}\rightarrow \overline{D_*}_P$ such that
$N(f, D)=N(f, \overline {D})<\infty.$ In particular, $\overline{f}$
is discrete in $\overline{D},$ that is, $\overline{f}^{\,-1}(P_0)$
consists only from isolated points for any $P_0\subset E_{D_*}.$ }
\end{lemma}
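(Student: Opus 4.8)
The plan is to combine the continuous extension that the weak flatness hypothesis already provides with a multiplicity (degree) argument that transfers the control of the interior fibers to the boundary. First I would invoke Theorem~\ref{th3}: since $\partial D$ is weakly flat and the growth conditions~(\ref{eq7***})--(\ref{eq3.7.2}) are a (stronger) special case of~(\ref{eq7BA})--(\ref{eq7CC}), Remark~\ref{rem3} applies and $f$ admits a continuous extension $\overline{f}:\overline{D}\rightarrow\overline{D_*}_P$. Because $f$ is closed it is boundary preserving (as in the proof of Theorem~\ref{th1}, via~\cite[Proposition~2.1]{Sev$_7$}), so $\overline{f}^{\,-1}(\widetilde{P_0})\subset\partial D$ for every $\widetilde{P_0}\in E_{D_*}$, while preimages of interior points stay interior. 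Moreover $f$ is proper (closed with finite fibers, by discreteness), and by the standard degree theory for open discrete proper mappings it has a finite constant topological degree $\mu$ with $N(y,f,D)\leqslant\mu$ for all $y\in D_*$; thus $N(f,D)=\mu<\infty$. Consequently it suffices to prove that $\overline{f}^{\,-1}(\widetilde{P_0})$ consists of at most $\mu$ points for each $\widetilde{P_0}\in E_{D_*}$, which will yield both $N(f,\overline{D})=N(f,D)<\infty$ and the discreteness of $\overline{f}$ on the boundary.

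Next I would fix $\widetilde{P_0}\in E_{D_*}$ and, by Proposition~\ref{pr1}, a chain of domains $d_l$ for $\widetilde{P_0}$ whose cuts lie on circles $\widetilde{S}(p_0,r_l)$ with $r_l\rightarrow 0$. Suppose $x_1,\dots,x_m$ are distinct points of $\overline{f}^{\,-1}(\widetilde{P_0})\cap\partial D$, and fix pairwise disjoint neighbourhoods $V_i\ni x_i$ in $\overline{D}$ with $\delta_0:=\min_{i\ne j}\operatorname{dist}(\overline{V_i},\overline{V_j})>0$. Each $x_i$ is a limit of interior points whose images converge to $\widetilde{P_0}$, so $x_i\in\overline{f^{\,-1}(d_l)}$ for every $l$; let $W_i(l)$ be a component of the open set $f^{\,-1}(d_l)$ accumulating at $x_i$. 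The key local fact is that $f$ maps $W_i(l)$ onto $d_l$: since $W_i(l)$ is open and closed in $f^{\,-1}(d_l)$ and $f$ is proper, the restriction $f|_{W_i(l)}:W_i(l)\rightarrow d_l$ is proper, and being also open and discrete onto the connected set $d_l$, its image is at once open and closed, hence all of $d_l$. Therefore every interior point $y^{*}\in d_l\cap D_*$ has at least one preimage inside each $W_i(l)$.

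The crux is to show that, for all sufficiently large $l$, the components $W_1(l),\dots,W_m(l)$ are pairwise distinct; granting this, the $m$ preimages of $y^{*}$ lying in the disjoint sets $W_i(l)$ are distinct, so that $m\leqslant N(y^{*},f,D)\leqslant\mu$, which is exactly the required bound. To prove distinctness I would argue by contradiction: if one component contained approaches to two different points $x_i,x_j$ for arbitrarily large $l$, then, choosing interior points of that component inside $V_i$ and inside $V_j$ and joining them by a path within the (connected) component, I would obtain a continuum $C_l\subset f^{\,-1}(d_l)\subset D$ with $\widetilde{h}(C_l)\geqslant\delta_0>0$ and $f(C_l)\subset d_l$, whence $\rho(f(C_l))\rightarrow 0$ and $\rho(f(C_l),\widetilde{P_0})\rightarrow 0$ as $l\rightarrow\infty$. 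Since the weakly flat boundary of $D$, together with the compactness of $\overline{D}$ and the finiteness of the number of boundary components, makes $D$ a uniform domain, this directly contradicts Lemma~\ref{lem1A}, whose hypotheses~(\ref{eq7***})--(\ref{eq3.7.2}) are in force. The contradiction gives the desired distinctness for large $l$.

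The main obstacle I anticipate is precisely this last step: associating to each boundary preimage $x_i$ a genuinely separate component $W_i(l)$ and excluding that two of them merge as $l\rightarrow\infty$. All the analytic input (the inverse Poletsky inequality and the divergence/growth conditions on $Q$) enters here through Lemma~\ref{lem1A}, while the geometric input (weak flatness implies uniformity) is what licenses its application. A secondary point demanding care is this uniformity; if a direct implication is not at hand, I would instead localise near each boundary component $\Gamma$ of $\partial D$ through the conformal chart $H$ of Proposition~\ref{pr1}, transport the continua $C_l$ to the ring $A(0,r_0,1)$, which is uniform by~\cite[Corollary~6.8]{Na$_1$}, and apply Lemma~\ref{lem1A} there to $f\circ H^{\,-1}$, exactly as in the proof of Lemma~\ref{lem4B}. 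Assembling these pieces shows that $\overline{f}^{\,-1}(\widetilde{P_0})$ is finite with at most $\mu$ points for each prime end, and hence that $N(f,\overline{D})=N(f,D)<\infty$ and $\overline{f}$ is discrete on $\overline{D}$.
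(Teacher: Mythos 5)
Your proposal is correct and follows essentially the same route as the paper's proof: continuous extension via Theorem~\ref{th3} and Remark~\ref{rem3}, finiteness of $N(f,D)$ (the paper cites \cite[Theorem~2.8]{MS}), components of the preimage of a small chain domain $d_l$ mapping onto $d_l$ (the paper cites \cite[Lemma~2.2]{MS} where you argue via properness), and Lemma~\ref{lem1A} applied to continua of $\widetilde{h}$-diameter bounded below whose images shrink to the prime end, so that components attached to distinct boundary preimages stay separated (the paper phrases this as $\widetilde{h}(\overline{V_p^i})\rightarrow 0$ as $p\rightarrow\infty$). Your fallback on the uniformity hypothesis of Lemma~\ref{lem1A} --- localizing through the conformal chart of Proposition~\ref{pr1} as in the proof of Lemma~\ref{lem4B} --- is if anything more careful than the paper, which invokes weak flatness and Lemma~\ref{lem1A} directly at that step.
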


\begin{proof}
First of all, the possibility of continuous extension of $f$ to a
mapping $\overline{f}:\overline{D}\rightarrow \overline{D_*}_P$
follows by Theorem~\ref{th3} together with Remark~\ref{rem3}. Note
also that, $N(f, D) <\infty$ (see~\cite[Theorem~2.8]{MS}). Let us to
prove that $N(f, D)=N(f, \overline{D}).$ Next we will reason using
the scheme proof of Theorem~4.7 in \cite{Vu}. Let $k:=N(f, D).$

\medskip
Assume the contrary, i.e., $N(f, D)<N(f, \overline {D}).$ Then there
are $\widetilde{P_0}\in E_{D_*}$ and points $x_1,x_2,\ldots, x_k,
x_{k+1}\in \overline{D}$ such that $f(x_m)=\widetilde{P_0},$
$m=1,2,\ldots .$ Since $f$ is closed, $x_m\in \partial{D}$ (see
\cite[Proposition~4.1]{Sev$_6$}).

\medskip
Set
$$\widetilde{B}_{\rho}(\widetilde{P_0}, 1/p):=\{y\in \overline{D_*}_P:
\rho(y, \widetilde{P_0})<1/p\}\,,$$
where $\rho$ is a metric in $\overline{D_*}_P.$ Let $d_0^k,$
$k=1,2,\ldots,$ be a sequence of domains corresponding to
$\widetilde{P_0}.$ Now, by the definition of the sequence $d_0^k,$
there is a number $k_{0,p}\in {\Bbb N}$ such that $d_0^{k_{0,
p}}\subset \widetilde{B}_{\rho}(P_0, 1/p).$ Set
$$U^{\,\prime}_p:=d_0^{k_{0,
p}}.$$

\medskip
Let us to prove that, for any $i=1,2,\ldots, k+1$ there is a
component $V_p^i$ of the set $f^{\,-1}(U^{\,\prime}_p)$ such that
$x_i\in\overline{V_p^i}.$ Fix $i=1,2,\ldots, k+1.$ By the continuity
of $f$ in $D,$ there is $r_i=r_i(x_i)>0$ such that
$f(\widetilde{B}(x_i, r_i)\cap D)\subset U^{\,\prime}_p,$ where
$$\widetilde{B}(x_i, r_i):=\{x\in {\Bbb S}: \widetilde{h}(x, x_i)<r_i\}\,.$$
Since $W_i:=\widetilde{B}(x_i, r_i)\cap D$ is connected, $W_i$
belongs to one and only one component $V^p_i$ of the set
$f^{\,-1}(U^{\,\prime}_p),$ while $x_i\in\overline{W_i}\subset
\overline{V_p^i},$ as required.

\medskip
Next we show that the sets $\overline{V_p^i}$ are disjoint for any
$i=1,2,\ldots, k+1$ and large enough $p\in {\Bbb N}.$ In turn, we
prove for this that $\widetilde{h}(\overline{V_p^i})\rightarrow 0$
as $p\rightarrow\infty$ for each fixed $i=1,2,\ldots, k+1,$ where
$\widetilde{h}(A)$ denotes the diameter of the set $A$ in ${\Bbb S}$
with respect to the metric $\widetilde{h}.$ Let us prove the
opposite. Then there is $1\leqslant i_0\leqslant k+1,$ a number
$r_0>0,$ $r_0<\frac{1}{2}\min\limits_{1\leqslant i, j\leqslant k+1,
i\ne j}\widetilde{h}(x_i, x_j)$ and an increasing sequence of
numbers $p_m,$ $m=1,2,\ldots,$ such that $\widetilde{S}(x_{i_0},
r_0)\cap \overline{V_{p_m}^{i_0}}\ne\varnothing,$ where
$\widetilde{S}(x_0, r)=\{x\in {\Bbb S}: \widetilde{h}(x, x_0)=r\}.$
In this case, there are $a_m, b_m\in V_{p_m}^{i_0}$ such that
$a_m\rightarrow x_{i_0}$ as $m\rightarrow\infty$ and
$\widetilde{h}(a_m, b_m)\geqslant r_0/2.$ Join the points $a_m$ and
$b_m$ by a path $C_m,$ which entirely belongs to $V_{p_m}^{i_0}.$
Then $\widetilde{h}(|C_m|)\geqslant r_0/2$ for $m=1,2,\ldots .$ On
the other hand, since $|C_m|\subset f(V_{p_m}^{i_0})\subset
\widetilde{B}_{\rho}(\widetilde{P_0}, 1/p_m),$ then simultaneously
$\rho(f(|C_m|))\rightarrow 0$ as $m\rightarrow\infty$ and
$\rho(f(|C_m|), p_0)\rightarrow 0$ as $m\rightarrow\infty.$ Since
$D$ has a weakly flat boundary (see e.g.~\cite[Theorems~17.10,
17.12]{Va}), the latter contradicts with Lemma~\ref{lem1A}. The
resulting contradiction indicates the incorrectness of the above
assumption.

By~\cite[Lemma~2.2]{MS}, $f$ is a mapping of $\overline{V_p^i}$ onto
$U^{\,\prime}_p$ for any $i=1,2,\ldots, k, k+1.$ Thus, $N(f, D)=N(f,
D)\geqslant k+1,$ which contradicts the definition of the number
$k.$ The obtained contradiction refutes the assumption that $N(f,
\overline{D})>N(f, D).$ The lemma is proved.~$\Box$
\end{proof}

\medskip
\begin{theorem}\label{th4}
{\,\sl Let $D$ and $D_*$ be domains on Riemannian surfaces ${\Bbb
S}$ and ${\Bbb S}_{\,*},$ respectively, having compact closures
$\overline{D}\subset {\Bbb S}$ and $\overline{D_*}\subset {\Bbb
S}_*,$ in addition, $\partial D_*$ has a finite number of
components. Assume that, $D$ has a weakly flat boundary. Let $f$ be
an open discrete and closed mapping of $D$ onto $D_*$ satisfying the
relations~(\ref{eq2*A})--(\ref{eqA2}) for any $p_0\in
\partial D_*.$  Assume that, either of the following conditions hold:

\medskip
1) $Q\in FMO(\partial D_*),$

\medskip
2) the relations
\begin{equation}\label{eq45A}
\int\limits_{\varepsilon}^{\varepsilon_0}
\frac{dt}{tq_{p_0}(t)}<\infty\,,\qquad \int\limits_0^{\varepsilon_0}
\frac{dt}{q_{p_0}(r)}=\infty\,,
\end{equation}
hold for any $p_0\in \partial  D_*,$ some $\varepsilon_0>0$ and any
$\varepsilon\in (0, \varepsilon_0,$ where $q_{p_0}(t)$ is defined
in~(\ref{eq26}).

Then the mapping $f$ has a continuous extension
$\overline{f}:\overline{D}\rightarrow \overline{D_*}_P$ such that
$N(f, D)=N(f, \overline {D})<\infty.$ In particular, $\overline{f}$
is discrete in $\overline{D},$ that is, $\overline{f}^{\,-1}(P_0)$
consists only from isolated points for any $P_0\subset E_{D_*}.$ }
\end{theorem}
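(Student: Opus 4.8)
The plan is to reduce Theorem~\ref{th4} to Lemma~\ref{lem3A}, which already yields exactly the desired conclusion — a continuous extension $\overline{f}:\overline{D}\rightarrow\overline{D_*}_P$ with $N(f,D)=N(f,\overline{D})<\infty$ and $\overline{f}$ discrete on $\overline{D}$ — under the abstract hypotheses~(\ref{eq7***})--(\ref{eq3.7.2}). Since all the remaining assumptions of Theorem~\ref{th4} coincide verbatim with those of Lemma~\ref{lem3A}, it suffices to verify that each of the two alternative conditions, $Q\in FMO(\partial D_*)$ and the divergence condition~(\ref{eq45A}), guarantees, for every $p_0\in\partial D_*$, the existence of a measurable function $\psi:(0,\varepsilon_0)\rightarrow[0,\infty]$ satisfying~(\ref{eq7***})--(\ref{eq3.7.2}) with the surface exponent $p=2$.

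For condition~2) the natural choice is $\psi(t)=1/\bigl(t\,q_{p_0}(t)\bigr)$, where $q_{p_0}$ is the spherical mean~(\ref{eq26}). Then $I(\varepsilon,\varepsilon_0)=\int_{\varepsilon}^{\varepsilon_0}\frac{dt}{t\,q_{p_0}(t)}$ is finite for every $\varepsilon>0$ by the first relation in~(\ref{eq45A}) and tends to $\infty$ as $\varepsilon\rightarrow 0$ by the second relation, which is precisely~(\ref{eq7***}). To verify~(\ref{eq3.7.2}) I would apply the coarea formula of Lemma~\ref{lem2}, together with the identity $\int_{\widetilde{S}(p_0,t)}Q\,ds_{\widetilde{h_*}}=t\,q_{p_0}(t)$, to obtain
\begin{equation*}
\int\limits_{\widetilde{A}(p_0,\varepsilon,\varepsilon_0)}Q(p)\,\psi^{2}(\widetilde{h_*}(p,p_0))\,d\widetilde{v_*}(p)
=\int\limits_{\varepsilon}^{\varepsilon_0}\frac{1}{t^2 q_{p_0}^2(t)}\left(\int\limits_{\widetilde{S}(p_0,t)}Q\,ds_{\widetilde{h_*}}\right)dt
=\int\limits_{\varepsilon}^{\varepsilon_0}\frac{dt}{t\,q_{p_0}(t)}=I(\varepsilon,\varepsilon_0)\,,
\end{equation*}
so that this integral is $o(I^2(\varepsilon,\varepsilon_0))$ as $\varepsilon\rightarrow 0$ because $I(\varepsilon,\varepsilon_0)\rightarrow\infty$; this is exactly~(\ref{eq3.7.2}).

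For condition~1) the appropriate weight is the logarithmic one, $\psi(t)=1/\bigl(t\log(1/t)\bigr)$, for which $I(\varepsilon,\varepsilon_0)$ is comparable to $\log\log(1/\varepsilon)$ and hence diverges as $\varepsilon\rightarrow 0$, giving~(\ref{eq7***}). The point is then to show that $Q\in FMO(p_0)$ forces
\begin{equation*}
\int\limits_{\widetilde{A}(p_0,\varepsilon,\varepsilon_0)}Q(p)\,\psi^{2}(\widetilde{h_*}(p,p_0))\,d\widetilde{v_*}(p)=O\bigl(\log\log(1/\varepsilon)\bigr)=o\bigl(I^2(\varepsilon,\varepsilon_0)\bigr)\,,
\end{equation*}
which again yields~(\ref{eq3.7.2}). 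This FMO estimate is the only genuinely delicate step: on a Riemannian surface one must pass from the mean-oscillation control of $Q$ over the geodesic disks $\widetilde{B}(p_0,\varepsilon)$ to the weighted integral over the annuli $\widetilde{A}(p_0,\varepsilon,\varepsilon_0)$, which requires the comparability of $d\widetilde{v_*}$ with the Euclidean area element in normal coordinates together with the coarea formula of Lemma~\ref{lem2}. Once these are in place the estimate is identical to its Euclidean counterpart, so I would invoke Proposition~4.7 and the proof of Theorem~4.8 in~\cite{Sev$_6$} rather than reproduce the computation. With both alternatives reduced to~(\ref{eq7***})--(\ref{eq3.7.2}), Lemma~\ref{lem3A} applies and the theorem follows.
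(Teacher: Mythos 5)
Your proposal is correct and follows essentially the same route as the paper: the paper's proof of Theorem~\ref{th4} is precisely the reduction to Lemma~\ref{lem3A} via the observation that conditions 1) and 2) are particular cases of~(\ref{eq7***})--(\ref{eq3.7.2}), citing Proposition~4.7 and the proof of Theorem~4.8 in~\cite{Sev$_6$} for the verification. Your explicit choices $\psi(t)=1/(t\,q_{p_0}(t))$ and $\psi(t)=1/(t\log(1/t))$, with the Fubini computation via Lemma~\ref{lem2}, merely spell out the details the paper delegates to that citation, and they are the standard (and correct) ones.
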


\begin{proof}
Proof immediately follows from Lemma~\ref{lem3A}, taking into
account that the relations $Q\in FMO(\partial D_*)$ or~(\ref{eq45A})
are particular cases of~(\ref{eq7***})--(\ref{eq3.7.2}) (see
Proposition~4.7 and details of proof of Theorem~4.8
in~\cite{Sev$_6$}).~$\Box$
\end{proof}

\medskip
{\bf \noindent Evgeny Sevost'yanov} \\
{\bf 1.} Zhytomyr Ivan Franko State University,  \\
40 Bol'shaya Berdichevskaya Str., 10 008  Zhytomyr, UKRAINE \\
{\bf 2.} Institute of Applied Mathematics and Mechanics\\
of NAS of Ukraine, \\
1 Dobrovol'skogo Str., 84 100 Slavyansk,  UKRAINE\\
esevostyanov2009@gmail.com

\end{document}